\documentclass[preprint]{article}
\usepackage{lineno}
\usepackage{tikz}
\usetikzlibrary{arrows}
\usepackage{graphicx}
\usepackage{color}
\usepackage[colorlinks]{hyperref}
\usepackage{euscript}
\usepackage{mathdots}
\usepackage{yhmath}
\usepackage{cancel}
\usepackage{siunitx}
\usepackage{array}
\usepackage{multirow}
\usepackage{gensymb}
\usepackage{tabularx}
\usepackage{booktabs}
\usepackage{acronym}
\usepackage{enumitem}
\usepackage{upgreek}
\usepackage[all]{xy}
\usepackage{mathtools}
\usepackage[normalem]{ulem}
\usepackage{amsmath,amsthm,amsfonts,amssymb,amscd,euscript}
\usepackage{inputenc}



\newcommand{\R}{\mathbb R}
\newcommand{\Z}{\mathbb Z}
\newcommand{\N}{\mathbb N}

\newcommand{\ifs}{$\mathfrak{I}=(X,\,\mathcal{F},\,\Sigma)\;$}

\newtheorem{thm}{Theorem}[section]

\newtheorem{prop}[thm]{Proposition}
\newtheorem{example}[thm]{Example}
\newtheorem{defn}[thm]{Definition}
\theoremstyle{remark}
\newtheorem{rem}[thm]{Remark}

\pagestyle{myheadings}

\makeatother

\pagestyle{empty} 


\pagestyle{myheadings}

\textheight =20cm
\textwidth =13cm
%

%


\begin{document}

\title{Iterated function systems over arbitrary shift spaces}
\author{Dawoud Ahmadi Dastjerdi\footnote{dahmadi1387@gmail.com (ahmadi@guilan.ac.ir)},\and Mahdi Aghaee \footnote{mahdi.aghaei66@gmail.com}
}
\maketitle

\begin{abstract}
The orbit of a point $x\in X$  in a classical iterated function system (IFS) can be defined as
$\{f_u(x)=f_{u_n}\circ\cdots \circ f_{u_1}(x):$ $u=u_1\cdots u_n$ is a word of a full shift $\Sigma$ on finite symbols and $f_{u_i}$ is a continuous self map on $X$  $\}$.
One also can associate to $\sigma=\sigma_1\sigma_2\cdots\in\Sigma$ a non-autonomous system $(X,\,f_\sigma)$ where the trajectory of $x\in X$ is defined as $x,\,f_{\sigma_1}(x),\,f_{\sigma_1\sigma_2}(x),\ldots$.
 Here instead of the full shift, we consider 
an arbitrary shift space $\Sigma$. Then we investigate  basic properties related to this IFS and the associated non-autonomous systems.
In particular, we look for sufficient conditions that guarantees that in a transitive IFS one may have a transitive 
$(X,\,f_\sigma)$ for some $\sigma\in\Sigma$  and how abundance are such $\sigma$'s.
\end{abstract}

{\bf Keywords:}
iterated function systems (IFS), non-autonomous system,  topological transitivity, point transitivity.
\\
2010 Mathematics Subject Classification: 37B55,  37B05, 37B10.

\section{Introduction}

In a classical dynamical system, here called \emph{conventional dynamical system}, we have a phase space and a unique  map where the trajectories of points are obtained by iterating this map. 
However, in  various problems, including applied ones, one may have some finite sequence of maps in place of a single map acting on the same phase space.
As  an example let $X$ be the space of mixture of some materials which are supposed to be mixed by application of two robotic arms $r_0$ and $r_1$ and only one of them at each unit of time. Due to some technical considerations, two $r_1$ cannot be applied in a row, though this consideration is not on place for $r_0$. Thus the application of these arms, and hence the dynamics of the system, is bound to the golden subshift, i.e., the subshift whose forbidden set is $\{11\}$. 
In fact, there are many natural processes whose evolution evolve with discrete time which are  involved with two or more interactions. For instance, in Physics by two or more maps have appeared in \cite{allison2001control,parrondonew}, Economy in \cite{zhang2006discrete} and Biology in \cite{de2008generalized}. In Mathematics, this has been studied either by non-autonomous systems in many literature such as \cite{kolyada1996topological} or as iterated function system (IFS) 
for constructing and studying some fractals
in \cite{diaconis1999iterated,hutchinson1981fractals} or for investigating dynamical properties in some studies such as \cite{barnsley2013conley,barrientos2017chaos,glavan2006shadowing,hui2018some}.

In a ``classical" IFS, a compact metric space $X$ and a set of some $k$ finite continuous functions $\{f_0,\,\cdots,\, f_{k-1}\}$ on $X$ are assumed and the trajectory of a point $x\in X$ is considered to be the action on $x$ of the sequence  of freely combination of those maps, or action on $x$ of combination of those maps over the words of a full shift: just write $f_u=f_{u_1}\circ\cdots\circ f_{u_m}$ where $u=u_1\cdots u_m$ is a word of the full shift over $k$ symbols. Hence no limitation is applied  as in our aforesaid example on the robotic arms where there words were forbidden to have $11$ as a subword. 
The limitation applied on the shift space would transfer to some limitations on the system.
For instance, a system may be {topological transitive} in classical IFS but not in our case, i.e., when the full shift is replaced with a more general subshift.

Thus one may look at $X$ as a phase space and the subshift $\Sigma$ as a  parameter space showing how the maps must be combined.

\vskip.5cm
	\noindent{\bf The present paper is organized as follows.} 
	In Section \ref{preliminary}, we formalize the definitions and notations. 
	Section \ref{sec transitivity} is mainly devoted to the definitions of transitivity in IFS and relation between them. In particular, we show that when the shift space is sofic, topological transitivity in the constituent IFS implies the {point transitivity} along a transitive orbit in the shift space; a fact which is not necessarily satisfied for nonsofics. 
	In Section \ref{subs st of S}, we like to see how large the set $S=\{\sigma\in\Sigma:\;\exists\, x\in X,\,\overline{ \mathcal{O}_{\sigma}(x)}=X\}$ can be. Section \ref{sec Mixing in IFS } mixing and exactness of an IFS versus to those properties along orbits through some examples has been considered.

\section{Preliminaries}\label{preliminary}

\subsection{Iterated function systems}
Throughout the paper, $ X $ will be a compact
metric space. 
The \emph{classical} iterated function system (IFS) consists of finitely many continuous self maps $ \mathcal{F}=\left\lbrace f_0,\,\ldots,\,f_{k-1} \right\rbrace  $ on  $X$. 
The \emph{forward orbit} of a point $x\in X$, denoted by $\mathcal{O}^+(x)$, is the set of all values of  finite possible combinations of $f_i$'s at $x$. 
We need the following equivalent statement: Let $\Sigma_{|\mathcal{F}|}$ be the full shift on $k$ symbols and let $\mathcal{L}(\Sigma_{|\mathcal{F}|})$ called the \emph{language of $\Sigma_{|\mathcal{F}|}$} be the set of words or blocks.
Define $f_u(x):X\rightarrow X$ by 
\begin{equation}\label{eq def of f_u}
	f_{u_n}\circ\cdots \circ f_{u_1}(x),\quad u=u_1\cdots u_n\in\mathcal{L}(\Sigma_{|\mathcal{F}|}).
\end{equation}
Then $\mathcal{O}^+(x)=\{f_u(x):\; u\in \mathcal{L}(\Sigma_{|\mathcal{F}|})\}$.
Such iterated function systems, here called \emph{classical IFS}, have 
been the subject of study for quite a long time.

Here we define an IFS to be
\begin{equation}\label{eq IFS}
	\mathfrak{I}=(X,\,  \mathcal{F}=\left\lbrace f_0,\,\ldots,\, f_{k-1}\right\rbrace,\, \Sigma).
\end{equation}
where each $f_i$ is continuous and $\Sigma$ is an arbitrary subshift on $k$ symbols, not necessarily the full shift $\Sigma_{|\mathcal{F}|}$ as in the classical IFS.
Later a brief review of symbolic dynamics will be given  in subsection \ref{subsec Symbolic}. 
By this setting, $\Sigma_{|\mathcal{F}|}$ above will be replaced with $\Sigma$ and thus
$\mathcal{O}^+(x)=\{f_u(x):\; u\in \mathcal{L}(\Sigma)\}$ is the forward orbit of $x$. In particular, $f_u(f_v(x))=f_{vu}(x)$  whenever $vu$ is admissible or equivalently $vu\in\mathcal{L}(\Sigma)$.
Let $u=u_1\cdots u_n\in\mathcal{L}(\Sigma)$ and set $u^{-1}:=u_n\cdots u_1$.
Then for $A\subseteq X$,
\begin{eqnarray*}
	(f_{u})^{-1}(A)&=&(f_{u_n}\circ\cdots\circ f_{u_1})^{-1}(A)\\
	&=&f^{-1}_{u_1}\circ\cdots\circ f^{-1}_{u_n}(A)\\
	&=&f^{-1}_{u^{-1}}(A),
\end{eqnarray*}
where for the last equality, we used \eqref{eq def of f_u}. Also
\begin{eqnarray*}
	f^{-1}_{u^{-1}}( f^{-1}_{v^{-1}}(A))&=&f^{-1}_{v^{-1}u^{-1}}(A)=f^{-1}_{(uv)^{-1}}(A)\\
	&=&\left(f_{uv}\right)^{-1}(A).
\end{eqnarray*}
Thus the backward orbit and the (full) orbit of a point $x\in X$ are $\mathcal{O}_-(x)=\{f^{-1}_{u^{-1}}(x):\; u\in\mathcal{L}(\Sigma)\}$ and $\mathcal{O}(x)=\mathcal{O}^+_-(x)=\mathcal{O}^+(x)\cup \mathcal{O}_-(x)$ respectively.

When all $f_i$'s are homeomorphisms, the backward, forward and full trajectory of $x$ is defined.

We say $\mathcal{F}=\lbrace f_0,\, \ldots ,\,f_{k-1}\rbrace$  is \emph{surjective},
\emph{injective}, \emph{homeomorphism} if all $f_i$'s in $\mathcal{F}$  are so.

When $k=1$ and $ \Sigma=\{0^{\infty}\} $,  we simply have the classical dynamical system, here called \emph{conventional dynamical system} denoted either by the pair $ (X,\, f ) $ or $\mathfrak{I}=(X,\,\{f_0\},\,\{0^\infty\})$.
\subsection{Symbolic dynamics}\label{subsec Symbolic}
A brief recall of the symbolic dynamics is given here.
Notations and main ideas are borrowed from \cite{lind2021introduction} and the proofs of the claims  can be found there.
Let $ \mathcal{A} $ be a non-empty finite set and let  $\Sigma_{|\mathcal{A}|}= \mathcal{A}^{\Z} $ (resp.  $ \mathcal{A}^{\N}  $) be the collection of all bi-infinite (resp. right-infinite) sequences of symbols from $ \mathcal{A} $. The shift map on $ \Sigma_{|\mathcal{A}|} $ is the map $ \tau $  where $ \tau(\sigma)=\sigma' $ is defined by  $ \sigma'_i=\sigma_{i+1} $.
 The pair $ (\Sigma_{|\mathcal{A}|},\,\tau) $ is the \emph{full shift} and any closed invariant subset $ \Sigma $ of $ \Sigma_{|\mathcal{A}|} $ is called a \emph{subshift} or a \emph{shift space}.
A \emph{word}  or \emph{block} over $ \mathcal{A} $ is a finite sequence of symbols from $ \mathcal{A} $. 
Denote by $ \mathcal{L}_{n}(\Sigma) $ 
the set of all admissible $ n $-words and call 
$ \mathcal{L}(\Sigma): =\bigcup_{n=0}^{\infty}\mathcal{L}_n(\Sigma)$ 
the \emph{language} of $ \Sigma $. 
For $ u \in \mathcal{L}_k(\Sigma) $,
let the cylinder $_\ell[u]_{\ell+k-1}$ $= _\ell\hspace{-1mm}[u_\ell\cdots u_{\ell+k-1}]_{\ell+k-1} $ be the set $ \{\sigma=\cdots \sigma_{-1}\sigma_0\sigma_1\cdots \in \Sigma :
\sigma_{\ell}\cdots\sigma_{\ell+k-1} = u\} $. If $\ell=0$, we drop the subscripts and we just write $[u]$.

A shift space $ \Sigma $ is \emph{irreducible} if for every ordered pair of words $ u,\,v\in \mathcal{L} (\Sigma) $ there is a word $ w\in \mathcal{L} (\Sigma) $ so that $ uwv\in \mathcal{L} (\Sigma) $.
A point $ \sigma \in \Sigma $ is \emph{transitive} if every word in $ \Sigma $ appears in $ \sigma $ infinitely many often. 
A subshift $ \Sigma $ is irreducible iff $ \Sigma $ has a transitive point.

Shift spaces described by a finite set of forbidden blocks are called \emph{shifts of finite type} (SFT) and their factors are called \emph{sofic}. A word $ w\in \mathcal{L}(\Sigma) $ is called \emph{synchronizing} if $ uwv \in\mathcal{L}(\Sigma) $ whenever $uw,wv \in\mathcal{L}(\Sigma)$. A \emph{synchronized system} is an irreducible shift which has a synchronizing word. Any sofic is synchronized.

A subshift $ \Sigma $ is \emph{specified}, or has the
\emph{specification property}, if there is $ N\in\N $ such that if $ u,v \in \mathcal{L}(\Sigma) $, then there is $ w $ of length $ N $ so that $ uwv \in \mathcal{L}(\Sigma) $.
A specified system is mixing and synchronized and any mixing sofic is specified.
A \emph{coded system} is the closure of the set of sequences obtained by freely concatenating the words in a list of words. In particular, any synchronized system is coded.

All synchronized systems have an (edge) labeled graph presentation called \emph{cover}. 	These  are directed graphs  whose edges are with assigned labels from $\mathcal{A}$ and infinite walk on the graph and recording the labels will represent a point in the subshift. 
The set of all such points is dense in the subshift. 
\subsubsection{Factors and extensions in an IFS}

There is a natural way to define factors in a non-autonomous  and in classical iterated function systems  \cite{kolyada1996topological} and \cite{liu2020topological}  Let $(X,\, f_{1,\,\infty})$ and $(Y,\, g_{1,\,\infty})$ be two non-autonomous systems. 
Then, $(Y,\, g_{1,\,\infty})$ is a factor of  $(X,\, f_{1,\,\infty})$, if there is a surjective continuous function $\varphi:X\rightarrow Y$ such that  $\varphi\circ f_i(x)=g_i\circ\varphi(x)$ for each $x\in X$ and each $i\in\N$. Also let  $\mathfrak{I}=(X,\,\mathcal{F},\, \Sigma_{|\mathcal{A}|})$ and 
$\mathfrak{I}'=(Y,\,\mathcal{G},\, \Sigma_{|\mathcal{A}'|})$ be two classical IFS where $\Sigma_{|\mathcal{A}|}$ and $\Sigma_{|\mathcal{A}'|}$ are full shifts over the finite alphabets $\mathcal{A}$ and $\mathcal{A}'$.
Then, $\mathfrak{I}'$ is a factor of $\mathfrak{I}$ if $|\mathcal{A}|\geq |\mathcal{A}'|$ and there are surjective continuous maps $\Psi:\Sigma_{|\mathcal{A}|}\rightarrow \Sigma_{|\mathcal{A}'|}$ and $\varphi:X\rightarrow Y$ such that $\varphi\circ f_{\sigma_i}(x)=g_{\Psi(\sigma_i)}\circ\varphi(x)$ for each $x\in X$ and each $\sigma\in\Sigma_{|\mathcal{A}|}$.
In the latter, by the way it has been defined, a necessity for  $\mathfrak{I}'$ being a factor of $\mathfrak{I}$ is that  $\Sigma_{|\mathcal{A}'|}$ being a factor of $\Sigma_{|\mathcal{A}|}$.

Now we set up to define factors in general IFS. First let $ \Sigma $ and $ \Sigma' $ be subshifts on the alphabets $ \mathcal{A} $ and $ \mathcal{A}' $ respectively and suppose that $ \Sigma'$ is a factor of $ \Sigma $ via an $ (m+n+1) $-block map $ \Psi:\mathcal{B}_{m+n+1}(\Sigma)\to \mathcal{A} $. Let $ \psi = \Psi_{\infty}=\Psi^{[-m,\,n]}_{\infty}:\Sigma\to \Sigma' $ be the induced sliding block map with 
\begin{equation*}
	\psi(\sigma) = \psi (\cdots \sigma_{-1}\sigma_{0}\sigma_{1}\cdots)=\cdots \sigma'_{-1}\sigma'_{0}\sigma'_{1}\cdots
\end{equation*}
whenever $ \Psi(\sigma_{i-m}\sigma_{i-m+1}\cdots  \sigma_{i+n})=\sigma'_{i} $. For one sided shifts, let $m=0$.
If necessary, by passing to higher block shifts \cite[\S 1.4]{lind2021introduction}, and replacing $\Sigma$ and $\Sigma'$ with suitable conjugate subshifts, one may consider that $\Psi$ to be a 1-block map.
\begin{defn}\label{def factor}
	Let $ \varphi $ be a continuous map from $ X $ onto $ Y $. Then $ \mathfrak{I}'=(Y,\mathcal{G}=\{g_0,\,\ldots,\,g_{\ell'}\},\,\Sigma') $ is a \emph{factor} of $\mathfrak{I}=(X,\mathcal{F}=\{f_0,\,\ldots,\,f_{\ell}\},\,\Sigma) $ if $\Sigma'$ is a factor of $\Sigma$ as above and for all $ y\in Y $ and $ x\in \varphi^{-1}(y) $
	\begin{equation*}
		\varphi\circ f_{\sigma_{i-m}\cdots \sigma_{i+n}}(x)=g_{\sigma_i'}(y) .
	\end{equation*}
	Hence, $\mathfrak{I}'$ is a factor of $\mathfrak{I}$, if for each $\sigma\in\Sigma$, the non-autonomous system $(Y,\, g_{\psi(\sigma)}) $ is a factor of $(X,\,f_\sigma)$ via $\varphi:X\rightarrow Y$.
	In this situation, $\mathfrak{I}$ is called an \emph{extension} of $\mathfrak{I}'$. 
	If $\mathfrak{I}$ is also a factor of $\mathfrak{I}'$, then $\mathfrak{I}$ and $\mathfrak{I}'$ are \emph{conjugate}.
\end{defn}
If necessary, by passing to an $N$-higher block shifts for some $N\in\N$ and replacing  $\Sigma$ with that new shift and $\mathcal{F}$ with 
$\{f_u:\; u \in\mathcal{L}_N(\Sigma)\}$, we may consider that $\Psi$ defining the block factor map between $\Sigma$ and $\Sigma'$, $N$-higher block shift, to be a 1-block map.

It is trivial to check that all dynamical properties defined in Definitions \ref{def intro} and \ref{defn along an orbit} are invariant under the factor map.

\section{Transitivity}\label{sec transitivity}
Two sorts of transitivity are very common in the study of topological dynamical systems: topological transitivity and point transitivity. These two concepts are the same for surjective conventional dynamical systems on the compact metric spaces such as subshifts but not for IFS's and non-autonomous dynamical systems.
Hence we say a transitive point in $\Sigma$ but will emphasize for point transitivity or topological transitivity in other places.
\begin{defn}\label{def intro}
	Consider $\mathfrak{I}$ as in \eqref{eq IFS} and let $U$ and $V$ be arbitrary open sets in $X$. Then $\mathfrak{I}$ is
	\begin{enumerate}
		\item 
		\emph{ ``forward" point transitive}, if  there is $x\in X$ such that $\{f_u(x):\;u\in\mathcal{L}(\Sigma)\}$ is dense in $X$. We drop ``forward" when it is clear from the context.
		\emph{Backward point transitivity} is likewise defined.
        \item
      \emph{topological transitive}, if there is $ u\in \mathcal{L}_n(\Sigma) $ such that $f_u(U)\cap V\neq\emptyset$.
		\item 
		\emph{mixing}, if there is $M=M(U,\,V)\in\N$ such that for $n\geq M$, there is $u\in\mathcal{L}_n(\Sigma)$ such that $f_u(U)\cap V\neq\emptyset$.
		\item 
		\emph{exact}, if there is $u(U)\in\mathcal{L}({\Sigma})$ such that for any $uu'\in\mathcal{L}(\Sigma)$, $f_{uu'}(U)=X$.
	\end{enumerate}
\end{defn}

We have the following implications in any IFS:
\begin{equation}\label{eq exmixtran}
	\text{exactness } \Rightarrow \text{ mixing } \Rightarrow \text{ topological transitivity } \Rightarrow \text {point transitivity}.
\end{equation}	
 The first two implications follow from the definition and the last from the next proposition. Also, since conventional dynamical systems are IFS,  they provides examples that the first two  implications are not reversible.
\begin{prop}\label{prop trans0}
	Let \ifs be a surjective IFS. If for arbitrary
	non-empty open sets  
	$U,\, V$ there is $u\in\mathcal{L}(\Sigma)$ such that
	$(f_u)^{-1}(U)\cap V \neq\emptyset$, then $\mathfrak{I}$ is point transitive.
	
\end{prop}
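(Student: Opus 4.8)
The plan is to run a Baire category argument, exactly parallel to the classical proof that topological transitivity implies point transitivity for surjective maps on compact metric spaces. Since $X$ is compact metric it is second countable; fix a countable base $\{U_j\}_{j\in\N}$ of $X$ consisting of nonempty open sets. For each $j$ set $G_j:=\bigcup_{u\in\mathcal{L}(\Sigma)}(f_u)^{-1}(U_j)$. Each $f_u$ is continuous, so every $(f_u)^{-1}(U_j)$ is open and hence $G_j$ is open.

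The next step is to show that each $G_j$ is dense in $X$. Let $V\subseteq X$ be an arbitrary nonempty open set. By hypothesis there is $u\in\mathcal{L}(\Sigma)$ with $(f_u)^{-1}(U_j)\cap V\neq\emptyset$, so $G_j\cap V\neq\emptyset$. As $V$ was arbitrary, $G_j$ is a dense open subset of $X$. Since a compact metric space is a Baire space, the intersection $G:=\bigcap_{j\in\N}G_j$ is dense, in particular nonempty. Pick any $x\in G$. Then for every $j$ there is $u\in\mathcal{L}(\Sigma)$ with $x\in(f_u)^{-1}(U_j)$, i.e.\ $f_u(x)\in U_j$. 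Hence $\{f_u(x):u\in\mathcal{L}(\Sigma)\}$ meets every member of the base $\{U_j\}$ and is therefore dense in $X$, so $\mathfrak{I}$ is (forward) point transitive.

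The argument is essentially routine, so there is no serious obstacle; the only points needing care are bookkeeping ones. First, one should note that $\mathcal{L}(\Sigma)$ is countable (it is a set of finite words over a finite alphabet), so the $G_j$ are honest countable unions of open sets, although openness would in any case hold for an arbitrary union. Second, it is worth remarking where surjectivity is used: in fact it is not needed for this particular implication, but it is retained in the hypotheses so that the orbit notions in Definition \ref{def intro} refer to the intended objects, consistent with the rest of the section.

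If one prefers to avoid invoking Baire's theorem, the same point $x$ can be constructed by hand: choose inductively nonempty open sets $V_1\supseteq\overline{V_2}\supseteq V_2\supseteq\overline{V_3}\supseteq\cdots$ with $\operatorname{diam}(V_j)\to 0$ and $\overline{V_{j+1}}\subseteq (f_{u_j})^{-1}(U_j)\cap V_j$ for a suitable $u_j\in\mathcal{L}(\Sigma)$ furnished by the hypothesis, and then let $\{x\}=\bigcap_{j}\overline{V_j}$, which exists by compactness; this $x$ has $f_{u_j}(x)\in U_j$ for every $j$.
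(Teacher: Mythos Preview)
Your proof is correct and follows essentially the same Baire category argument as the paper: define $G_j=\bigcup_{u\in\mathcal{L}(\Sigma)}(f_u)^{-1}(U_j)$ for a countable base $\{U_j\}$, show each $G_j$ is dense open, and take $x$ in the residual intersection. Your additional remarks (countability of $\mathcal{L}(\Sigma)$, the observation that surjectivity is not actually used in this implication, and the explicit nested-sets construction avoiding Baire) are correct but go beyond what the paper records.
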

\begin{proof}
	Let $\mathcal{B} =\{U_n : n\in\N\} $ be a countable base for $ X $.
	Fix $n\in\N$ and set 
	\begin{equation}\label{eq G_n}
		G_n:=\cup_{u\in\mathcal{L}(\Sigma)}(f_{u})^{-1}(U_n).
	\end{equation} 
	By the assumption for an arbitrary open set $V$, $G_n\cap V\neq\emptyset$ and so  the open set $G_n$ is dense and as a result, $\cap_{n\in\N}G_n$ is residual.
	Hence for $x\in \cap_{n\in\N}G_n $ and any $n\in\N$, there is $u\in\mathcal{L}(\Sigma)$ such that 
	$ x\in (f_{u})^{-1}(U_n) $.
	This means 
	$ f_{u}(x)\in U_n $ and so $ x $ is a transitive point.
\end{proof}
Unlike a surjective conventional dynamical system, the last implication in \eqref{eq exmixtran} is not reversible. {This fact was noticed in some literature \cite{kontorovich2008note,parham2020iterated}; however, we did not find any example to justify, so we bring our own.} 
\begin{example}\label{Ex P.T not T.T}
Let $\mathfrak{I}=(X,\,\{f_0,\,f_1,\,f_2\},\,\Sigma_{|\mathcal{F}|})$ where $X=\{1/n:\;n\in\N\}\cup\{0\}\subset\R$ is equipped with subspace topology.

 Our maps are defined as follows. (See Figure \ref{fig P.T not T.T}.)
For all $i$, $f_i(0)=0$ and $f_i(1)=1$. 
\begin{eqnarray*}
	f_0(\frac{1}{n+1})&=&\frac{1}{n},\quad n\geq 1,\\
	f_1(\frac{1}{2n+1})&=&\frac{1}{2n},\quad n\geq 1,\\
	f_1(\frac{1}{2n})&=&\frac{1}{2n+1}, \quad n\geq 1
	\quad \text{and}\quad f_1(\frac{1}{2})=\frac{1}{3}.
\end{eqnarray*}
Also, $f_2(\frac{1}{4})=\frac{1}{2}$, $f_2(\frac{1}{2})=\frac{1}{3}$ and
\begin{eqnarray*}
	f_2(\frac{1}{2n+1})&=&\frac{1}{2n+2},\quad n\geq 1,\\
	f_2(\frac{1}{2n+2})&=&\frac{1}{2n+1}, \quad n\geq 2.
	\end{eqnarray*}
All maps are continuous, open and surjective. Both $f_1$ and $f_2$ are homeomorphism, but $f_0$ is not injective: $f_0(\frac{1}{2})=f_0(1)=1$.

Observe that any point $x=\frac{1}{n}$, $n\geq 2$ is transitive. However,  the system is not topological transitive. Because, for any $u$, $f_u(\{1\})\cap\{\frac{1}{2}\}=\emptyset$.
It is easy to observe that 
$\mathfrak{I}$ is neither topological nor point transitive along any orbit.

\begin{figure}
	
	\begin{center}
	\tikzset{every picture/.style={line width=0.40pt}} 
	
	\begin{tikzpicture}[x=0.50pt,y=0.50pt,yscale=-0.95,xscale=0.95]
	 [id:da17185013147064265]

\draw    (167,102) -- (219.89,102.32) ;
\draw [shift={(221.89,102.33)}, rotate = 180.35] [color={rgb, 255:red, 0; green, 0; blue, 0 }  ][line width=0.75]    (10.93,-3.29) .. controls (6.95,-1.4) and (3.31,-0.3) .. (0,0) .. controls (3.31,0.3) and (6.95,1.4) .. (10.93,3.29)   ;
\draw [shift={(167,102)}, rotate = 0.35] [color={rgb, 255:red, 0; green, 0; blue, 0 }  ][fill={rgb, 255:red, 0; green, 0; blue, 0 }  ][line width=0.75]      (0, 0) circle [x radius= 3.35, y radius= 3.35]   ;
\draw    (221.89,102.33) -- (274.78,102.65) ;
\draw [shift={(276.78,102.67)}, rotate = 180.35] [color={rgb, 255:red, 0; green, 0; blue, 0 }  ][line width=0.75]    (10.93,-3.29) .. controls (6.95,-1.4) and (3.31,-0.3) .. (0,0) .. controls (3.31,0.3) and (6.95,1.4) .. (10.93,3.29)   ;
\draw [shift={(221.89,102.33)}, rotate = 0.35] [color={rgb, 255:red, 0; green, 0; blue, 0 }  ][fill={rgb, 255:red, 0; green, 0; blue, 0 }  ][line width=0.75]      (0, 0) circle [x radius= 3.35, y radius= 3.35]   ;
\draw    (276.78,102.67) -- (329.67,102.99) ;
\draw [shift={(331.67,103)}, rotate = 180.35] [color={rgb, 255:red, 0; green, 0; blue, 0 }  ][line width=0.75]    (10.93,-3.29) .. controls (6.95,-1.4) and (3.31,-0.3) .. (0,0) .. controls (3.31,0.3) and (6.95,1.4) .. (10.93,3.29)   ;
\draw [shift={(276.78,102.67)}, rotate = 0.35] [color={rgb, 255:red, 0; green, 0; blue, 0 }  ][fill={rgb, 255:red, 0; green, 0; blue, 0 }  ][line width=0.75]      (0, 0) circle [x radius= 3.35, y radius= 3.35]   ;
\draw    (331.67,103) -- (384.56,103.32) ;
\draw [shift={(386.56,103.33)}, rotate = 180.35] [color={rgb, 255:red, 0; green, 0; blue, 0 }  ][line width=0.75]    (10.93,-3.29) .. controls (6.95,-1.4) and (3.31,-0.3) .. (0,0) .. controls (3.31,0.3) and (6.95,1.4) .. (10.93,3.29)   ;
\draw [shift={(331.67,103)}, rotate = 0.35] [color={rgb, 255:red, 0; green, 0; blue, 0 }  ][fill={rgb, 255:red, 0; green, 0; blue, 0 }  ][line width=0.75]      (0, 0) circle [x radius= 3.35, y radius= 3.35]   ;
\draw    (386.56,103.33) -- (439.44,103.65) ;
\draw [shift={(441.44,103.67)}, rotate = 180.35] [color={rgb, 255:red, 0; green, 0; blue, 0 }  ][line width=0.75]    (10.93,-3.29) .. controls (6.95,-1.4) and (3.31,-0.3) .. (0,0) .. controls (3.31,0.3) and (6.95,1.4) .. (10.93,3.29)   ;
\draw [shift={(386.56,103.33)}, rotate = 0.35] [color={rgb, 255:red, 0; green, 0; blue, 0 }  ][fill={rgb, 255:red, 0; green, 0; blue, 0 }  ][line width=0.75]      (0, 0) circle [x radius= 3.35, y radius= 3.35]   ;
\draw    (441.44,103.67) -- (545.33,103.99) ;
\draw [shift={(547.33,104)}, rotate = 180.18] [color={rgb, 255:red, 0; green, 0; blue, 0 }  ][line width=0.75]    (10.93,-3.29) .. controls (6.95,-1.4) and (3.31,-0.3) .. (0,0) .. controls (3.31,0.3) and (6.95,1.4) .. (10.93,3.29)   ;
\draw [shift={(441.44,103.67)}, rotate = 0.18] [color={rgb, 255:red, 0; green, 0; blue, 0 }  ][fill={rgb, 255:red, 0; green, 0; blue, 0 }  ][line width=0.75]      (0, 0) circle [x radius= 3.35, y radius= 3.35]   ;
\draw    (547.33,104) ;
\draw [shift={(547.33,104)}, rotate = 0] [color={rgb, 255:red, 0; green, 0; blue, 0 }  ][fill={rgb, 255:red, 0; green, 0; blue, 0 }  ][line width=0.75]      (0, 0) circle [x radius= 3.35, y radius= 3.35]   ;
\draw    (124.33,102) ;
\draw [shift={(124.33,102)}, rotate = 0] [color={rgb, 255:red, 0; green, 0; blue, 0 }  ][fill={rgb, 255:red, 0; green, 0; blue, 0 }  ][line width=0.75]      (0, 0) circle [x radius= 3.35, y radius= 3.35]   ;
\draw [shift={(124.33,102)}, rotate = 0] [color={rgb, 255:red, 0; green, 0; blue, 0 }  ][fill={rgb, 255:red, 0; green, 0; blue, 0 }  ][line width=0.75]      (0, 0) circle [x radius= 3.35, y radius= 3.35]   ;
\draw    (547.33,104) .. controls (653.89,5.33) and (655.89,170.33) .. (548.89,109.33) ;
\draw    (124.33,102) .. controls (51.01,53.08) and (66.4,131.9) .. (123.26,108.48) ;
\draw [shift={(125.89,107.33)}, rotate = 155.41] [fill={rgb, 255:red, 0; green, 0; blue, 0 }  ][line width=0.08]  [draw opacity=0] (10.72,-5.15) -- (0,0) -- (10.72,5.15) -- (7.12,0) -- cycle    ;
\draw    (547.33,104) .. controls (633.02,47.9) and (613.3,141.43) .. (552.74,112.26) ;
\draw [shift={(550.89,111.33)}, rotate = 27.3] [fill={rgb, 255:red, 0; green, 0; blue, 0 }  ][line width=0.08]  [draw opacity=0] (10.72,-5.15) -- (0,0) -- (10.72,5.15) -- (7.12,0) -- cycle    ;
\draw    (547.33,104) .. controls (666.89,-36.67) and (694.89,194.33) .. (548.89,109.33) ;
\draw    (124.33,102) .. controls (5.89,14.33) and (27.89,145.33) .. (125.89,107.33) ;
\draw    (124.33,102) .. controls (4.89,-25.67) and (-28.11,163.33) .. (125.89,107.33) ;
\draw    (221.89,102.33) .. controls (211.11,64.11) and (185.92,63.35) .. (168.08,99.73) ;
\draw [shift={(167,102)}, rotate = 294.83] [fill={rgb, 255:red, 0; green, 0; blue, 0 }  ][line width=0.08]  [draw opacity=0] (10.72,-5.15) -- (0,0) -- (10.72,5.15) -- (7.12,0) -- cycle    ;
\draw    (276.78,102.67) .. controls (266,64.45) and (240.81,63.68) .. (222.97,100.06) ;
\draw [shift={(221.89,102.33)}, rotate = 294.83] [fill={rgb, 255:red, 0; green, 0; blue, 0 }  ][line width=0.08]  [draw opacity=0] (10.72,-5.15) -- (0,0) -- (10.72,5.15) -- (7.12,0) -- cycle    ;
\draw    (331.67,103) .. controls (320.89,64.78) and (295.7,64.02) .. (277.86,100.39) ;
\draw [shift={(276.78,102.67)}, rotate = 294.83] [fill={rgb, 255:red, 0; green, 0; blue, 0 }  ][line width=0.08]  [draw opacity=0] (10.72,-5.15) -- (0,0) -- (10.72,5.15) -- (7.12,0) -- cycle    ;
\draw    (386.56,103.33) .. controls (375.78,65.11) and (350.59,64.35) .. (332.75,100.73) ;
\draw [shift={(331.67,103)}, rotate = 294.83] [fill={rgb, 255:red, 0; green, 0; blue, 0 }  ][line width=0.08]  [draw opacity=0] (10.72,-5.15) -- (0,0) -- (10.72,5.15) -- (7.12,0) -- cycle    ;
\draw    (441.44,103.67) .. controls (430.66,65.45) and (405.48,64.68) .. (387.64,101.06) ;
\draw [shift={(386.56,103.33)}, rotate = 294.83] [fill={rgb, 255:red, 0; green, 0; blue, 0 }  ][line width=0.08]  [draw opacity=0] (10.72,-5.15) -- (0,0) -- (10.72,5.15) -- (7.12,0) -- cycle    ;
\draw    (167,102) .. controls (177.62,140.35) and (205.45,134.65) .. (220.74,104.69) ;
\draw [shift={(221.89,102.33)}, rotate = 115.11] [fill={rgb, 255:red, 0; green, 0; blue, 0 }  ][line width=0.08]  [draw opacity=0] (10.72,-5.15) -- (0,0) -- (10.72,5.15) -- (7.12,0) -- cycle    ;
\draw    (221.89,102.33) .. controls (232.51,140.68) and (260.34,134.99) .. (275.63,105.02) ;
\draw [shift={(276.78,102.67)}, rotate = 115.11] [fill={rgb, 255:red, 0; green, 0; blue, 0 }  ][line width=0.08]  [draw opacity=0] (10.72,-5.15) -- (0,0) -- (10.72,5.15) -- (7.12,0) -- cycle    ;
\draw    (276.78,102.67) .. controls (287.39,141.02) and (315.23,135.32) .. (330.52,105.35) ;
\draw [shift={(331.67,103)}, rotate = 115.11] [fill={rgb, 255:red, 0; green, 0; blue, 0 }  ][line width=0.08]  [draw opacity=0] (10.72,-5.15) -- (0,0) -- (10.72,5.15) -- (7.12,0) -- cycle    ;
\draw    (386.56,103.33) .. controls (397.17,141.68) and (425.01,135.99) .. (440.29,106.02) ;
\draw [shift={(441.44,103.67)}, rotate = 115.11] [fill={rgb, 255:red, 0; green, 0; blue, 0 }  ][line width=0.08]  [draw opacity=0] (10.72,-5.15) -- (0,0) -- (10.72,5.15) -- (7.12,0) -- cycle    ;
\draw    (331.67,103) .. controls (342.89,187.33) and (412.89,186.33) .. (441.44,103.67) ;
\draw    (441.44,103.67) .. controls (475.89,42.33) and (412.89,26.33) .. (386.56,103.33) ;

\draw (129,95.4) node [anchor=north west][inner sep=0.75pt]    {$\cdots $};
\draw (605,82.4) node [anchor=north west][inner sep=0.75pt]  [font=\small]  {$f_{0}$};
\draw (628,71) node [anchor=north west][inner sep=0.75pt]  [font=\small]  {$f_{1}$};
\draw (649,62.4) node [anchor=north west][inner sep=0.75pt]  [font=\small]  {$f_{2}$};
\draw (54,81.4) node [anchor=north west][inner sep=0.75pt]  [font=\small]  {$f_{0}$};
\draw (25,66) node [anchor=north west][inner sep=0.75pt]  [font=\small]  {$f_{1}$};
\draw (2,51.4) node [anchor=north west][inner sep=0.75pt]  [font=\small]  {$f_{2}$};
\draw (188,52.4) node [anchor=north west][inner sep=0.75pt]  [font=\small]  {$f_{1}$};
\draw (241,53.4) node [anchor=north west][inner sep=0.75pt]  [font=\small]  {$f_{2}$};
\draw (423,57) node [anchor=north west][inner sep=0.75pt]  [font=\small]  {$f_{1}$};
\draw (297,52.4) node [anchor=north west][inner sep=0.75pt]  [font=\small]  {$f_{1}$};
\draw (184,132.4) node [anchor=north west][inner sep=0.75pt]  [font=\small]  {$f_{1}$};
\draw (238,132.4) node [anchor=north west][inner sep=0.75pt]  [font=\small]  {$f_{2}$};
\draw (391,130.4) node [anchor=north west][inner sep=0.75pt]  [font=\small]  {$f_{1}$};
\draw (377,166.4) node [anchor=north west][inner sep=0.75pt]  [font=\small]  {$f_{2}$};
\draw (435,30.4) node [anchor=north west][inner sep=0.75pt]  [font=\small]  {$f_{2}$};
\draw (185,79.4) node [anchor=north west][inner sep=0.75pt]  [font=\small]  {$f_{0}$};
\draw (241,81.4) node [anchor=north west][inner sep=0.75pt]  [font=\small]  {$f_{0}$};
\draw (295,81.4) node [anchor=north west][inner sep=0.75pt]  [font=\small]  {$f_{0}$};
\draw (347,81.4) node [anchor=north west][inner sep=0.75pt]  [font=\small]  {$f_{0}$};
\draw (406,81.4) node [anchor=north west][inner sep=0.75pt]  [font=\small]  {$f_{0}$};
\draw (481,81.4) node [anchor=north west][inner sep=0.75pt]  [font=\small]  {$f_{0}$};
\draw (296,133.4) node [anchor=north west][inner sep=0.75pt]  [font=\small]  {$f_{1}$};
\draw (355,53.4) node [anchor=north west][inner sep=0.75pt]  [font=\small]  {$f_{2}$};
\draw (122,114) node [anchor=north west][inner sep=0.75pt]  [font=\small]  {$0$};
\draw (538,114) node [anchor=north west][inner sep=0.75pt]  [font=\small]  {$1$};
\draw (440,110) node [anchor=north west][inner sep=0.75pt]  [font=\small]  {$\frac{1}{2}$};

\end{tikzpicture}

\end{center}
\caption{Nodes represent the points in $X$. The farthest node on the right is 1, the second $\frac{1}{2}$ and so on.}
\label{fig P.T not T.T}
\end{figure}
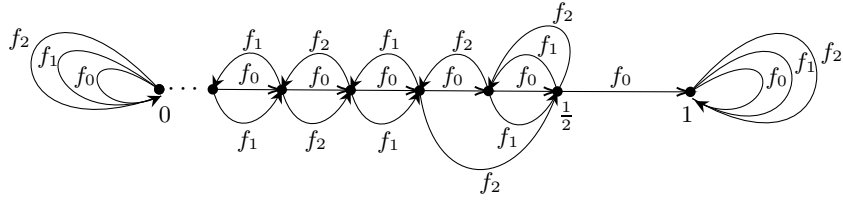
\end{example}
It is worth mentioning  that if $\mathcal{F}$ was homeomorphism in Definition \ref{def intro}, then topological and point transitivity were equivalent \cite{cairns2007topological}.

\subsection{Dynamics along an orbit as a non-autonomous dynamical system}
Let $X$ be a topological space and $f_n:X\to X$ a continuous map for $n\in\N$. 
Then the sequence $\{f_n\}_1^\infty$ denoted by $f_{1,\,\infty}$ defines a non-autonomous discrete dynamical system $(X,\, f_{1,\,\infty})$ \cite{kolyada1996topological}.
In an IFS,
dynamics along a $\sigma$ also defines a non-autonomous   system which we show it by $(X,\,f_\sigma)$ or $f_\sigma:=\{f_{\sigma_i}\}_{i=1}^\infty$ (resp.  $f_\sigma:=\{f_{\sigma_i}\}_{i=-\infty}^{+\infty}$) when 
$\Sigma$ is one sided (resp.  two sided).
If $\Sigma$ is over a finite alphabet, then clearly $f_\sigma$ is defined 
only by finitely many different $f_i$'s.

Let $\sigma=\sigma_1\sigma_2\cdots\in\Sigma$. Then the sequence 
$x,\,f_{\sigma_1}(x),\,f_{\sigma_1\sigma_2}(x),\ldots$ is the \emph{trajectory of $x$ along} $\sigma$ and $ \mathcal{O}^+_{\sigma}(x) $
the set of points in this trajectory is the \emph{(forward) orbit of $x$ along $\sigma$}. 
The backward orbit and backward trajectory may be defined similarly for the case where $\Sigma$ is a two sided subshift. 
Hence one may say that \ifs has property $P$ along  $\sigma\in\Sigma$ if  the respective non-autonomous system $(X,\,f_\sigma)$ has property $P$. By this the following definition may sound abundance, though we bring it for the sake of completeness. 

\begin{defn}\label{defn along an orbit}
	Let \ifs  be an IFS and $U$, $V$ arbitrary non-empty open sets in $X$. Then $\mathfrak{I}$ is called
	\begin{enumerate}
	
		\item 
		\emph{ forward point transitive along an orbit} $\sigma\in\Sigma$, if there is a point $x\in X$, called the transitive point, such that $\overline{\mathcal{O}_\sigma^+(x)}=X$.
	
		\item
		\emph{ topological transitive along an orbit} $\sigma\in\Sigma$, if there is $ n\in\N$ such that $f_{\sigma_1\cdots \sigma_n}(U)\cap V\neq\emptyset$.
		\item 
		\emph{mixing} (resp. \emph{exact}) \emph{along an orbit}  $\sigma\in\Sigma$, if there is $N\in\N$ such that for $n\geq N$, $f_{\sigma_1\cdots\sigma_n}(U)\cap V\neq\emptyset$ (resp. $f_{\sigma_1\cdots\sigma_n}(U)=X$).
	\end{enumerate}
\end{defn}
Similar implications as in \eqref{eq exmixtran} hold here as well. So we have the following result.
\begin{prop}\cite[Proposition 4.6]{sanchez2017chaos}
If an IFS has	topological transitivity along $\sigma$, then it is point transitive along $\sigma$.
\end{prop}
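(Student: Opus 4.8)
The statement is the non-autonomous analogue of Proposition \ref{prop trans0}, and the plan is to run essentially the same Baire-category argument, with the family $\{f_u:u\in\mathcal{L}(\Sigma)\}$ replaced by the family of finite compositions read along $\sigma$, namely $\{f_{\sigma_1\cdots\sigma_n}:n\geq 0\}$, where for $n=0$ we read $f_{\sigma_1\cdots\sigma_n}$ as the identity so that $x$ itself is counted in $\mathcal{O}_\sigma^+(x)$. The only structural remark needed is the purely set-theoretic equivalence $f_{\sigma_1\cdots\sigma_n}(U)\cap V\neq\emptyset \iff U\cap (f_{\sigma_1\cdots\sigma_n})^{-1}(V)\neq\emptyset$, which holds for any map and hence requires no surjectivity hypothesis.

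Concretely, I would fix a countable base $\mathcal{B}=\{U_m:m\in\N\}$ for $X$ (which exists since $X$ is compact metric, hence second countable) and, for each $m$, set
\[
G_m:=\bigcup_{n\geq 0}(f_{\sigma_1\cdots\sigma_n})^{-1}(U_m),
\]
which is open. To see that $G_m$ is dense, take an arbitrary non-empty open $V$ and apply topological transitivity along $\sigma$ to the ordered pair $(V,U_m)$: there is $n\in\N$ with $f_{\sigma_1\cdots\sigma_n}(V)\cap U_m\neq\emptyset$, i.e.\ $V\cap (f_{\sigma_1\cdots\sigma_n})^{-1}(U_m)\neq\emptyset$, so $V\cap G_m\neq\emptyset$. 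Thus each $G_m$ is open and dense, and since $X$ is a complete metric space the Baire category theorem gives that $R:=\bigcap_{m\in\N}G_m$ is residual, in particular nonempty.

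Finally, for any $x\in R$ and any $m\in\N$ there is $n\geq 0$ with $x\in (f_{\sigma_1\cdots\sigma_n})^{-1}(U_m)$, that is, $f_{\sigma_1\cdots\sigma_n}(x)\in U_m$; hence $\mathcal{O}_\sigma^+(x)$ meets every basic open set, so $\overline{\mathcal{O}_\sigma^+(x)}=X$ and $\mathfrak{I}$ is forward point transitive along $\sigma$. The argument is essentially routine Birkhoff-type reasoning applied to the non-autonomous ``hitting-time'' sets $G_m$; the only points demanding a little care are that the constituent maps $f_{\sigma_i}$ need not be surjective (so one must argue throughout with preimages, never images) and that the composition order $f_{\sigma_1\cdots\sigma_n}=f_{\sigma_n}\circ\cdots\circ f_{\sigma_1}$ of \eqref{eq def of f_u} is used consistently when unwinding $(f_{\sigma_1\cdots\sigma_n})^{-1}$.
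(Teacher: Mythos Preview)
Your proposal is correct and follows essentially the same route as the paper: the paper's proof simply says to mimic the argument of Proposition~\ref{prop trans0} with $G_n$ replaced by $\bigcup_{\ell\in\N}(f_{\sigma_1\sigma_2\cdots\sigma_\ell})^{-1}(U_n)$, which is precisely the Baire-category computation you carry out in detail. Your explicit remark that surjectivity is unnecessary (since the hitting condition can always be rewritten via preimages) is a small but welcome clarification beyond what the paper spells out.
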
 
\begin{proof}
	The proof is similar to the proof of Proposition \ref{prop trans0} by replacing \eqref{eq G_n} with 
	
		$G_n=\cup_{\ell\in\N}(f_{\sigma_1\sigma_2\cdots\sigma_\ell})^{-1}(U_n)$ and applying the same reasoning.
\end{proof}
The converse of the above proposition is not necessarily true as the next example shows. This example also shows that point transitivity along an orbit does not imply that the transitive points are residual along that orbit.
			
\begin{example}\label{Ex trans & top trans}
	Let $X=[0,\,1]$ and
	$ \mathfrak{I}=(X,\,\{f_0,\,f_1\},\,\Sigma_{|\mathcal{F}|}) $ where
	\begin{equation*}
		f_0(x)=
		\begin{cases}
			2x, &  0\leq x \leq\dfrac{1}{2} ,\\
			1 &  \dfrac{1}{2}\leq x \leq 1,
		\end{cases} \quad \text{and}\quad
		f_1(x)=
		\begin{cases}
			0, & 0\leq x \leq\dfrac{1}{2} ,\\
			2x-1, &  \dfrac{1}{2}\leq x \leq 1.
		\end{cases}
	\end{equation*}
	Also let $f:[0,\,1]\to [0,\,1]$ be defined as $f(x)=2x\mod 1$, i.e.
	\begin{equation*}
		f(x)=
		\begin{cases}
			f_0(x), &  0\leq x \leq\dfrac{1}{2} ,\\
			f_1(x), &   \dfrac{1}{2}\leq x \leq 1.
		\end{cases}
	\end{equation*}
	Let $z\in(0,\,1/2)$ be a transitive point of $f$ and set $\sigma:=\sigma_1\sigma_2\cdots\in\Sigma_{|\mathcal{F}|}$ where $\sigma_1=0$ and for $i>1$, $\sigma_i=0$ (resp. $\sigma_i=1$) whenever $f_{\sigma_1\cdots\sigma_{i-1}}(z)\in (0,\,1/2)$ (resp. $f_{\sigma_1\cdots\sigma_{i-1}}(z)\in (1/2,\,1)$).
	By this settings,  $z$ is a transitive point and so  the non-autonomous system $([0,\,1],\,f_\sigma)$ is point transitive, but not topological transitive. Because for $U=(1/2,\,1)$, $V=(0,\,1/2)$ and for any $n\in\N$,  $f_{\sigma_1\cdots \sigma_n}(U)\cap V=\emptyset$.		
\end{example}


\subsection{Transitivity in IFS vs transitivity in the subshift}  \label{sec Transitivity in IFS}
In general there is not a meaningful relation between the dynamical properties of $(\Sigma,\,\tau)$ and that of $\mathfrak{I}$.
For instance consider \eqref{eq IFS} and let $X=[0,\,1]$, $\mathcal{F}=\{f_0(x)=2x\mod 1,\,f_1(x)\equiv 0\}$ and $\Sigma=\Sigma_{|\mathcal{F}|}=\{0,\,1\}^{\N}$. 
Then, for any open set $U\subset X$, there is some $m$ such that for  $u=0^m$, $f_u(U)=X$. 
Thus  interesting dynamics happens along just one point $\sigma\in\Sigma$, i.e., $\sigma=0^\infty$. Surely there will be some relations  when $f_i$'s are surjective and  some conditions exist on $\Sigma$. This is what we are interested to deal with.

So far we know that transitivity along an orbit defined in Definition \ref{defn along an orbit} implies the transitivity of the system defined in Definition \ref{def intro}. However,  the converse is not necessarily true as we will see in  Example \ref{ex trans contrad}.
Having this in mind,  we like to address the following questions  in 
this section. 

\begin{enumerate}
	\item \label{ques equi notions}
	Does transitivities given in \ref{def intro} imply some sort of transitivity given in Definition \ref{defn along an orbit}?
	\item \label{ques large S}
If the answer to the above question is affirmative, 	in which situation there is a transitive $t\in\Sigma$ such that for some $x\in X$, 
	$\overline{{O}^+_{t}(x)}=X$? 
\end{enumerate}


The following example shows, as one expects,  that transitivity depends on the subshift.
\begin{example}\label{ex trans}
	Let	$\mathfrak{I}=(X,\,\mathcal{F},\,\Sigma)$
	where $ \Sigma $ is an SFT generated by $\mathcal{W}=\{01,\,10\} $ and  $ f_0 $ is the shift map on the two sided full shift  $ X=\{0,\,1\}^{\Z} $ and $ f_1=f_0^{-1} $.
	Clearly this system is not point transitive.
	Moreover, if $ W=_{-1}\!\![000]_{1} $ and $ V= _{-1}\!\![111]_{1} $ are two open central cylinders in $ X $  and if $ w $ is any word in $ \Sigma $,
	then $ f^{-1}_{w}W\cap V = \emptyset $ and so $\mathfrak{I}$  is not topological transitive either. However, if $\Sigma$ were generated by  $\mathcal{W}\cup\{0\}$, then the constituent IFS was both topological and point transitive, showing that transitivity depends  on our subshift.
\end{example}


Later the sets $X$ and $\mathcal{F}=\{f_0,\, f_1\}$ introduced in the following example will be used in several occasions, for instance 
in examples \ref{ex trans contrad}, \ref{ex nonSVGL} and \ref{ex not dense}.

\begin{example}\label{Ex in sequence} 
	Let $\{x_n\}_{n\in\Z}$ be an increasing sequence ($x_{n+1}>x_n$) in $[0,\,1]$ such that $\lim_{n\rightarrow +\infty}x_n=1$ and 
	$\lim_{n\rightarrow -\infty}x_n=0$. 
	Let $X$ be the set of points of this sequence together with $0$ and $1$ and equip $X$ with the subset topology of $[0,\,1]$.
	\begin{enumerate}
		\item
		Let $\mathfrak{I}_1=(X,\,\{f_0\},\,\{0^\Z\})$ be the conventional dynamical system where 
		\begin{equation}\label{f_0}
			f_0(x)=
			\begin{cases}
				x &  \text{if}\quad x\in\{0,\,1\},\\
				x_{n+1} &  \text{if}\quad x=x_n.
			\end{cases}
		\end{equation}
		This system is {point transitive} but not {forward point transitive}. In fact, any point in $X\setminus \{0,\,1\}$ is a transitive point  and $0$ and $1$ are fixed points. 
		\item
		Let  $\mathfrak{I}_2=(X,\,\{f_0,\,f_1\},\,\Sigma)$ and let $f_0$ be as in \eqref{f_0} but $f_1$ be defined as 
		\begin{equation}\label{f_1} 
			f_1(x)=
			\begin{cases}
				x  & \text{if}\quad x\in\{0,\,1\},\\
				x_{n-1}  & \text{if}\quad x=x_n.
			\end{cases}
		\end{equation}
		Also let $\Sigma\subseteq \{0,\,1\}^{\N_{0}}$ be generated by $\mathcal{W}=\{w_0,\,w_1,\,w_3,\ldots\}$ 
		so that there are two words in $\mathcal{W}$, say $w_0$ and $w_1$ such that $ |w_0|=|w_1| $ and
		$\frac{0_{w_0}}{|w_0|}=\frac{1_{w_1}}{|w_1|}>\frac{1}{2}$ where $i_{w_j}$ is the number of i's appearing in $w_j$. 
		Then, $\mathfrak{I}_2$ is transitive. 
		To see this, let
		\begin{equation}\label{eq sigma0}
			\sigma^0=w_0w_1w_1w_0w_0w_0w_1w_1w_1w_1\cdots w_0^nw_1^{n+1}w_0^{n+2}w_1^{n+3}\cdots
		\end{equation} 
		and let $x\in X\setminus\{0,\,1\}$. Then, $\overline{\mathcal{O}_{\sigma^0}^+(x)}=X$. One example is when $\Sigma=\Sigma_{|\mathcal{F}|}$ and  $\mathcal{W}=\{w_0=0,\,w_1=1\}$ where then $\frac{0_{w_0}}{|w_0|}=\frac{1_{w_1}}{|w_1|}=1$.
	\end{enumerate}
\end{example}

Now we set up to show that  when $\Sigma$ is an irreducible sofic, functions are semi-open, i.e., interior of image of any open set is non-empty,  and when the respective IFS is topological transitive, then for some  transitive $t\in \Sigma$, one has  point transitivity along $t$. 
This will give an answer to questions \ref{ques equi notions} and \ref{ques large S}  on the beginning of this section for special cases where $\Sigma$ is an irreducible sofic.
First we recall a classical result.

\begin{thm}\label{Boyle Th}
	(Boyle \cite{boyle1983})
	Let $ \Sigma $ and $ \Sigma' $ be irreducible shifts of finite type with $ h(\Sigma)>h(\Sigma') $. Then, there is a factor code from $ \Sigma $ onto $ \Sigma' $ if and only if $ P(\Sigma) \searrow  P(\Sigma')$.
\end{thm}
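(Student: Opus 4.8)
The plan is to treat this as the classical Lower Entropy Factor Theorem and, since we only need it as a black box in the sequel, to indicate the strategy rather than carry it out. The statement splits into two halves of quite different character. For the easy (\emph{necessity}) direction I would argue as follows: a factor code $\varphi\colon\Sigma\to\Sigma'$ is realised by a sliding block map and commutes with the shift, so it carries periodic orbits onto periodic orbits, and a point of least period $p$ goes to a point of least period dividing $p$. Conversely, given a periodic orbit $O'\subseteq\Sigma'$ of least period $n$, the set $\varphi^{-1}(O')$ is the preimage of an SFT under a sliding block code, hence again an SFT, and it is non-empty since $\varphi$ is surjective; therefore it contains a periodic point, whose least period is a multiple of $n$ and whose orbit maps onto $O'$. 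Running this over all orbits of $\Sigma'$ produces precisely the combinatorial domination of periodic-point data recorded by $P(\Sigma)\searrow P(\Sigma')$; note that the strict entropy gap is not used at all here.

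For the hard (\emph{sufficiency}) direction --- which is dual to Krieger's embedding theorem --- I would proceed in stages. First reduce to the mixing case: an irreducible SFT is, up to passing to a power of the shift, a disjoint union of finitely many mixing SFTs cyclically permuted, and $P(\Sigma)\searrow P(\Sigma')$ forces the period of $\Sigma'$ to divide that of $\Sigma$ and supplies exactly the data needed to pair up the cyclic components, so it suffices to build a factor code between mixing SFTs $\Sigma,\Sigma'$ with $h(\Sigma)>h(\Sigma')$. Then construct $\varphi$ piece by piece: (i) fix a large enough finite family of periodic orbits of $\Sigma$ together with a map onto all short periodic orbits of $\Sigma'$, compatible with $P(\Sigma)\searrow P(\Sigma')$ --- a finite combinatorial step; (ii) apply the Marker Lemma \cite{lind2021introduction} to cut every non-periodic point of $\Sigma$ into long blocks separated by sparse, recognisable markers; (iii) since $h(\Sigma)>h(\Sigma')$, for $N$ large there are far more $\Sigma$-words of length $N$ than $\Sigma'$-words of that length, so an inter-marker block of $\Sigma$ has ample room to ``name'' an arbitrary block of $\Sigma'$; define $\varphi$ on each block by decoding that name, using fixed transition conventions near the markers so the images concatenate into a legitimate point of $\Sigma'$ and agree near periodic points with the assignment from step (i); (iv) verify that the map so defined has a bounded coding window (hence is continuous and shift commuting) and is onto $\Sigma'$.

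I expect the main obstacle to lie entirely in the sufficiency direction, in two spots: the reduction to the mixing case, where one must check that the relation $\searrow$ genuinely permits the cyclic components to be matched; and the simultaneous bookkeeping in steps (iii)--(iv), where the coding must be surjective, continuous through a finite window, \emph{and} consistent with the prescribed behaviour on the dense set of periodic points --- reconciling these three demands at the markers is the delicate point. Since reproducing Boyle's construction in full would take us far afield, the intention is simply to cite \cite{boyle1983} and use the theorem as stated.
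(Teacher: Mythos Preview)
Your plan matches the paper exactly: the paper does not prove this theorem at all but simply records it with a citation to Boyle \cite{boyle1983} and then invokes it in the proof of the subsequent proposition. Your outline of the necessity and sufficiency directions is accurate and more than the paper provides, but since the paper treats the result as a black box, your final intention to cite and use it as stated is precisely what is done.
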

\begin{prop}\label{prop tra along sigma}
	Let $\mathfrak{I}=(X,\,\mathcal{F},\,\Sigma)$  be a surjective and topological transitive IFS and maps in $\mathcal{F}$ semi-open. 
	 Also let $\Sigma$ be an irreducible  sofic. Then there is a forward transitive $t\in\Sigma$ such that the non-autonomous system $(X,\,f_t)$ is point transitive.
\end{prop}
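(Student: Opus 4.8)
The plan is to exploit topological transitivity of $\mathfrak{I}$ to build, by a Baire-category argument, a single residual set of points that can be driven into every basic open set, and simultaneously to record the itinerary of words used so as to assemble one transitive $t \in \Sigma$ that works uniformly. First I would fix a countable base $\mathcal{B} = \{U_n : n \in \N\}$ for $X$. The key enabling fact is that the $f_i$ are semi-open and surjective: semi-openness guarantees that $f_u(U)$ has non-empty interior for every word $u$ and open $U$, so images of open sets ``stay fat,'' which is what lets one iterate the transitivity condition without the sets collapsing to nowhere-dense pieces. Using topological transitivity, for any open $U$ and any target $U_n$ there is a word $u \in \mathcal{L}(\Sigma)$ with $f_u(U) \cap U_n \neq \emptyset$; combined with semi-openness one gets an open subset $U' \subseteq U$ and a word $u$ with $\overline{f_u(U')} \subseteq U_n$ (shrinking slightly).

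The heart of the argument is the bookkeeping that turns ``for each pair $(U,U_n)$ there is some word'' into ``there is one $t \in \Sigma$ whose prefixes realize all of them.'' Here I would use that $\Sigma$ is irreducible sofic: irreducibility lets me splice admissible words together with connecting words (through a synchronizing word, or just via irreducibility of the SFT cover given by Theorem~\ref{Boyle Th}'s setting — actually only irreducibility of $\Sigma$ itself is needed) so that a countable list of ``task words'' $w_1, w_2, \dots$ can be concatenated, with bridges, into a single right-infinite sequence $t = w_1 z_1 w_2 z_2 \cdots \in \Sigma$ that is transitive (every word of $\Sigma$ appears infinitely often — arrange the enumeration to revisit every block of $\mathcal{L}(\Sigma)$). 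The task words are chosen recursively: enumerate all pairs $(n,m)$; having committed to a prefix $t_1 \cdots t_{k}$ of $t$, look at a shrinking nested sequence of open sets and use transitivity-plus-semi-openness to find the next block of $t$ (plus a bridge word) so that after applying the corresponding composition, a chosen nested open set maps into $U_m$. Then $G_n := \bigcup_{\ell} (f_{t_1 \cdots t_\ell})^{-1}(U_n)$ is open and, by construction, dense; so $\bigcap_n G_n$ is residual and any $x$ in it satisfies $\overline{\mathcal{O}^+_t(x)} = X$, i.e. $(X, f_t)$ is point transitive along the transitive point $t$, exactly as in the proof of Proposition~\ref{prop trans0}.

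I expect the main obstacle to be the simultaneous control of the itinerary and the nested open sets: in Proposition~\ref{prop trans0} the word $u$ for each pair can be chosen independently, but here all the words must be consistent prefixes of a single $t$, so the recursive construction has to interleave (i) choosing the next task word realizing the next unfulfilled pair $(n,m)$ along a nested open set $V \subseteq$ (previous stage's set), (ii) inserting a bridge word from irreducibility so the concatenation stays in $\mathcal{L}(\Sigma)$, and (iii) periodically inserting every admissible word to make $t$ transitive. One must check that the bridge and transitivity insertions do not destroy the density already arranged — they don't, because once $f_{t_1\cdots t_\ell}(V) \subseteq U_m$ we are free to postcompose with anything: we only need the orbit of $x$ to \emph{visit} $U_m$, not to stay. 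The secondary technical point is ensuring at each stage that the relevant open set is non-empty after applying a long composition, which is precisely where semi-openness of every $f_i$ (hence of every $f_u$, on interiors) is used; surjectivity is used to guarantee that preimages $(f_u)^{-1}(U_n)$ are non-empty so that each $G_n$ is genuinely dense. Boyle's theorem (Theorem~\ref{Boyle Th}) and the sofic cover are presumably invoked to reduce to an SFT or to get a clean irreducible cover on which the splicing of words is transparent; if a more direct argument using only irreducibility and a synchronizing word of $\Sigma$ suffices, I would prefer that, keeping the use of soficity to the existence of a synchronizing word.
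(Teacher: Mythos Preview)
Your overall plan --- build $t$ recursively inside $\Sigma$ by interleaving task words, bridge words, and ``visit-every-block'' words, then run a Baire argument --- is \emph{not} how the paper proceeds, and it hides the one real difficulty that the paper's route is designed to avoid.

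The paper does \emph{not} splice inside $\Sigma$. It first passes to the full shift $\Sigma_{|\mathcal F|}$: since $\mathcal L(\Sigma)\subseteq\mathcal L(\Sigma_{|\mathcal F|})$, topological transitivity of $\mathfrak I$ persists there, and in the full shift every concatenation is admissible, so one can freely append the transitivity word $v_k$ and then the ``list-all-words-of-length-$k$'' block $v_k'$ without any bridge. A nested \emph{compact} sets argument (not Baire) then produces a single $x$ with dense orbit along the resulting $t\in\Sigma_{|\mathcal F|}$. Only afterwards is Boyle's theorem used --- and its role is \emph{not} to splice words in $\Sigma$, but to furnish a factor code $\phi:\Sigma_{|\mathcal F|}\to\Sigma$ (this is why a fixed point is first added, to get $P(\Sigma_{|\mathcal F|})\searrow P(\Sigma)$), pushing $t$ down to a transitive $\phi(t)\in\Sigma$; the sofic case is then a further factor of an SFT.

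Your direct-in-$\Sigma$ approach runs into a genuine circularity you have not broken: having committed to a prefix $t_1\cdots t_k$, topological transitivity hands you a word $u\in\mathcal L(\Sigma)$ sending the current open set into $U_m$, but nothing guarantees $t_1\cdots t_k u\in\mathcal L(\Sigma)$. Inserting a bridge $z$ fixes admissibility but moves the open set (to $\operatorname{int} f_z(W)$), so $u$ no longer works and must be re-chosen --- which may require a different bridge, and so on. Semi-openness keeps the sets nonempty but does nothing to close this loop; neither does ``use a synchronizing word,'' since the transitivity word $u$ need not lie in the follower set of $s$. This is exactly the obstruction the paper sidesteps by working in the full shift, where no bridge is ever needed. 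A secondary issue: your Baire framing asks that $G_n=\bigcup_\ell (f_{t_1\cdots t_\ell})^{-1}(U_n)$ be dense, which is topological transitivity along $t$ --- strictly more than the point transitivity you are after. The paper's nested-compact-sets argument tracks a single shrinking set and avoids this overreach.
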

\begin{proof}
	Let $\mathcal{A}=\{0,\,\ldots,\,k-1\}$ be the set of characters of $\Sigma$. If $\Sigma$ does not have a fixed point,  replace $\mathcal{A}$ with $\mathcal{A}'=\mathcal{A}\cup\{k\}$ and  replace $\mathcal{F}$ with $\{f_0,\ldots,\,f_{k-1}\}\cup\{f_k\}$ where $f_k$ is the identity map and set $\Sigma'$ to be the corresponding subshift whose set of forbidden set is the same as $\Sigma$. Observe that $k^{\N_0}$ is a fixed point of $\Sigma'$ and if $t'\in\Sigma'$ is a transitive point, then $t$ obtained from $t'$ by forgetting the entries whose value is $k$ is transitive in $\Sigma$.
	Thus without loss of generality, we may assume that $\Sigma$ has a fixed point. 
	
	So let $ \mathfrak{I} $ be { topological transitive} and set $ \mathfrak{I}':=(X,\,\mathcal{F},\,\Sigma_{|\mathcal{F}|})$ and let $\mathcal{B}:= \{W_m : m\in \mathbb{N} \} $ be a base for the topology on $ X$. 
	First we construct a transitive point $ t\in \Sigma_{|\mathcal{F}|} $ such that $ \overline{{O}^+_{t}(x)}=X$.
	
	Let $U_m$ be an open set such that $\overline {U_m}\subseteq W_m$. Pick $v_1\in \mathcal{L}(\Sigma_{|\mathcal{F}|})$ such that $f_{v_1}(U_1)\cap U_2\neq\emptyset$ and consider $f_{v_1v'_1}$ where $v'_1$ is the concatenation of all characters or words of length 1. Then by the fact that $f_i$'s are semi-open and our system is topological transitive, there is $v_2$ such that $f_{v_1v'_1v_2}(U_1)\cap U_3\neq\emptyset$. By the same reasoning  and induction argument, there is $v_k$ such that
	for $u_k:=v_1v'_1v_2\cdots v_iv'_iv_{i+1}\cdots v_{k-1}v'_{k-1}v_k$,
	\begin{equation}\label{eq semi-open}
	f_{u_k}(U_1)\cap U_{k+1}\neq\emptyset.
	\end{equation}
	Here $v'_i$ is the concatenation of all words of length $i$.
	 Let  $C_k=\overline{U_1}\cap (f_{u_{k}})^{-1}(\overline{U_{k+1}})$ 
	 be the compact set in $W_1$
	 and notice that $C_{k+1}\subseteq C_k$; in particular,  $\cap_kC_k$ is a nonempty compact set in $W_1$. Thus if $x\in \cap_k C_k$, then $f_{u_k}(x)\in W_k$. This means that our system is point transitive along the transitive $t=v_1v'_1v_2\cdots\in\Sigma_{|\mathcal{F}|}$.
	 So the problem is set when $\Sigma$ is a full shift. 
	
	Now let $ \Sigma $ be SFT and recall that we are assuming that it has a fixed point. This means $P(\Sigma_{|\mathcal{F}|})\searrow P(\Sigma)$ and so by \ref{Boyle Th} there is a factor code $ \phi $ from $ \Sigma_{|\mathcal{F}|} $ onto $ \Sigma $ and in particular there exists a transitive point $\phi( t) \in \Sigma $  with $ \overline{{O}^+_{\phi(t)}(x)}=X $. It remains to prove the case when $ \Sigma $ is sofic. But any sofic is a factor of an SFT and transitivity is preserved by factor codes and take this factor code to be a 1-block factor code. 
	By an argument as above we may extend  this SFT to have a fixed point and the new character, if any, will map to a new added character in character set of $\Sigma$ by the block factor map whose associated map in $\mathfrak{I}$ is identity. 
	As a result, 
	a transitive $ t \in \Sigma $ and $ x\in X $ exists as required.
\end{proof}
In the above proposition, the same conclusion holds if we are sure that  for any $k$, there is $u_k$ such that as in \eqref{eq semi-open}, the intersection has non-empty interior.
In fact we conjecture that this is the case, i.e. if IFS is topological transitive, $\mathcal{F}$ surjective, then for any nonempty open sets $U$ and $V$ there is a $u\in\Sigma_{|\mathcal{F}|}$ such that  $f_u(U)\cap V$ has nonempty interior. In that case we do not require semi-openness in the hypothesis.

 Next we bring examples showing that none of the other conditions on the hypothesis of the above proposition can be ignored.
 \begin{example} The alphabet defining our subshift in the above proposition
 	 was finite; the conclusion is not valid for an infinite case.
	Authors in \cite[Example 2.1]{ghane2019topological} show that in that situation, even when the subshift is a full shift,
	the topological transitivity dose not imply topological transitivity along any orbit. 
 \end{example}
\begin{example}
		Topological transitivity of the IFS in Proposition \ref{prop tra along sigma} cannot be replaced with point transitivity. For instance, the system given in Example \ref{Ex P.T not T.T} had all the conditions on the hypothesis of the proposition (subshift was the full shift, and so sofic and all maps were open) except topological transitivity. There we had point transitivity of the IFS but yet we did not have point transitivity along any orbit.

\end{example}

Now we show that the sofic property cannot be omitted in the hypothesis of the above proposition. Moreover, this example shows that in general  topological  transitivity of an  IFS does not necessarily imply the point transitivity along any $\sigma\in\Sigma$. 
\begin{example}\label{ex trans contrad}
	Let $f_0$, $f_1$ be homeomorphisms defined in Example \ref{Ex in sequence} and let  $\mathfrak{I}=(X,\,\{f_0,\,f_1\},\,\Sigma)$ where $\Sigma\subseteq \{0,\,1\}^{\N}$ is the non-sofic shift generated by $\mathcal{W}=\{0^n1^n:\; n\in\N\}$. 
	Then any $\sigma\in\Sigma$ consists of concatenation of words in $\mathcal{W}$ and their shifts together with points in the closure of them. 
	Thus since $f_{0^n1^n}\equiv \text{id}$ for $n\in\N$, any $\sigma\in\Sigma$ is either concatenation of words in $\mathcal{W}$ or terminating at $0^\infty$ or $1^\infty$. Therefore, $\overline{\mathcal{O}^+_\sigma(x)}\neq X$ for any $\sigma\in\Sigma$ and  $x\in X$.

	On the other hand, any point  $x_0\in X\setminus \{0,\,1\}$ has dense orbit. 
	Because, since $0^{\N_0}$ and $1^{\N_0}$ are points of $\Sigma$,  $x_0$ can travel left and right as far as required by $f_0$ and $f_1$ respectively.
	As a result 		$\mathfrak{I}$ is topological transitive and yet  not point transitive along any $ \sigma\in \Sigma $.
\end{example}

\section{The abundance of  point transitive non-autonomous systems in an IFS}\label{subs st of S}
When a dynamical property such as transitivity, mixing, exactness occur along a $\sigma\in\Sigma$, then the IFS will posses that property as well; though the converse is not necessarily true. In fact, it may not even hold along just a single orbit. In this section we investigate transitivity in this respect.

Let 
\begin{equation}\label{eq S}
	S=	S(\mathfrak{I}):=\{\sigma\in\Sigma:\; \exists\, x\in X\text{ s.t. } \overline{\mathcal{O}_\sigma^+(x)}=X\}.
\end{equation}
 In general, except in few cases, a definite structure cannot be given for $S$, though its largeness can be understood in some cases. Let us demonstrate how different $S$ can be.
\begin{example}\label{ex 2in1}
	\begin{enumerate}
		\item
	$S$ may be all of $\Sigma$. 	 For an example,
	 let  $ f_0(x) =2x \mod 1 $ and $ f_1(x)=3x \mod 1  $ and consider
	 $\mathfrak{I}=([0,\,1],\,\{f_0,\,f_1\},\,\Sigma_{|\mathcal{F}|}) $.
	
		\item 
		$S$ may be an empty set.  This is the case when we have  an IFS which is not point transitive. Though even for a {topological transitive IFS,} $S$ still may be empty (see Example \ref{ex trans contrad}). 

		\item
		$S$ may be residual and yet not all of $\Sigma$. The IFS in Example \ref{Ex trans & top trans} has such property.

		\item
		$S$ may be dense and uncountable, yet not a residual subset. See Example \ref{ex not dense}.
	\end{enumerate}
	
\end{example}

Now we give sufficient conditions for $S$ being dense in $\Sigma$.
First a weaker version of specification property for subshifts:

\begin{defn}
	A subshift $\Sigma$ is called  a \emph{subshift of variable gap length} or SVGL, if there exists $M\in\N$ such that for $u$ and $v$ in $\mathcal{L}(\Sigma)$, there is $w$ with  $|w|\leq M$ and $uwv\in \mathcal{L}(\Sigma)$.
\end{defn}
When $\Sigma$ is mixing and SVGL, then  $\Sigma$ has  \emph{specification property} and in this situation, there exists  $M\in\N$ such that for $u$ and $v$ in $\mathcal{L}(\Sigma)$ there is $w$ with  $|w|= M$ and $uwv\in \mathcal{L}(\Sigma)$.
Clearly an SVGL is irreducible. Moreover, all  sofics are SVGL; however, there are SVGL's which are not sofic.
The SVGL is called almost specification property in
  \cite{jung2011existence}.
\begin{prop}\label{prop SVGL}
	Let $\mathfrak{I}=(X,\,\mathcal{F},\,\Sigma)$  be point transitive along some $\sigma\in\Sigma$,   $\mathcal{F}$ surjective and $\Sigma$ an SVGL. 
	Then, $S$ defined in \eqref{eq S} is dense in $\Sigma$. If $S\neq\Sigma$, then $\Sigma\setminus S$ is also dense in $\Sigma$.
\end{prop}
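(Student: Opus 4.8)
The plan is to take a $\sigma$ along which some $x\in X$ has a dense orbit, and show that nearby points of $\Sigma$ — points sharing an arbitrarily long prefix with $\sigma$ — can also be ``completed'' to carry a dense orbit, using the SVGL property to splice and surjectivity of $\mathcal F$ to keep points in play. Concretely, fix $N\in\N$ and a word $a=\sigma_1\cdots\sigma_n$ with $n\ge N$; I want to produce $t\in S$ with $t_1\cdots t_n = a$. The cylinder $[a]\cap\Sigma$ being an arbitrary basic open set of $\Sigma$, density of $S$ follows once every such cylinder meets $S$.

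First I would record the mechanism for building a dense orbit from scratch inside an SVGL: let $\{W_m:m\in\N\}$ be a countable base of $X$, let $M$ be the SVGL gap bound, and observe that since $\mathfrak I$ is point transitive along \emph{some} orbit, in particular for every $m$ there is a word $u^{(m)}\in\mathcal L(\Sigma)$ and an open set whose $f_{u^{(m)}}$-image meets $W_m$ — more carefully, I would extract from the single transitive orbit $(\sigma,x)$ that for each $m$ there is $\ell_m$ with $f_{\sigma_1\cdots\sigma_{\ell_m}}(x)\in W_m$, so the \emph{word} $\sigma_1\cdots\sigma_{\ell_m}$ witnesses $f_{\cdot}(\{x\})\subset$ something hitting $W_m$. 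The point is that the relevant words $w_m:=\sigma_1\cdots\sigma_{\ell_m}$ all lie in $\mathcal L(\Sigma)$. Now given the prefix $a$, use SVGL to pick a gap word $g_1$ ($|g_1|\le M$) so that $a g_1 w_1\in\mathcal L(\Sigma)$, then (since $f_{\cdot}$ is surjective along words, $\mathcal F$ being surjective, so preimages are nonempty) continue: pick $g_2$ with $a g_1 w_1 g_2 w_2\in\mathcal L(\Sigma)$, and inductively build
\[
t \;=\; a\, g_1 w_1 g_2 w_2 g_3 w_3\cdots\in\Sigma ,
\]
which lies in $[a]$. The surjectivity of $\mathcal F$ guarantees that at each stage the set $\overline{U_1}\cap (f_{\text{prefix}})^{-1}(\overline{W_m})$ is a nonempty compact set (nonempty because $f_{\text{prefix}}$ is onto), these sets are nested, so their intersection contains a point $y$, and then $f_{t_1\cdots t_{k}}(y)$ visits every $W_m$: $t\in S$, and $t\in[a]$, giving density of $S$.

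For the second assertion, if $S\ne\Sigma$ pick some $\rho\in\Sigma\setminus S$. Given a cylinder $[a]\cap\Sigma$ with $|a|\ge$ the SVGL bound, I would splice $a$ to a suffix of $\rho$: use SVGL to find $g$ with $|g|\le M$ and $a g \rho_1\rho_2\cdots\in\Sigma$ — here one has to be slightly careful that an infinite concatenation $a g \rho$ is a legitimate element of $\Sigma$, which follows because $\Sigma$ is closed and every finite subword $a g \rho_1\cdots\rho_j$ is admissible (SVGL only gives the first splice; admissibility of the tail is inherited from $\rho\in\Sigma$). Call this point $t'$. Then $t'\in[a]$, and $t'\notin S$: since $\mathcal F$ is surjective each $f_{t'_i}$ maps $X$ onto $X$, so $\overline{\mathcal O^+_{t'}(x)}$ and $\overline{\mathcal O^+_{\rho}(f_{a g}(x))}$ differ only by the finite initial segment contributed by $ag$, hence one is dense iff the other is; but no orbit along $\rho$ is dense, so no orbit along $t'$ is dense, i.e.\ $t'\in\Sigma\setminus S$. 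Thus $\Sigma\setminus S$ meets every basic open set, so it is dense.

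The main obstacle I expect is the bookkeeping in the first part: making sure that the nested compact sets $C_k=\overline{U_1}\cap(f_{t_1\cdots t_{N_k}})^{-1}(\overline{W_{k+1}})$ are genuinely nonempty. This is exactly where surjectivity of $\mathcal F$ is used — each $f_i$ onto implies each $f_w$ onto implies $(f_w)^{-1}(\overline{W_{k+1}})$ has nonempty interior intersecting any prescribed open set? No: onto only gives $(f_w)^{-1}(\overline{W_{k+1}})\neq\emptyset$, not that it meets $\overline{U_1}$. So the cleanest route is \emph{not} to insist the orbit start in a fixed $U_1$, but rather to build $t$ first (purely combinatorially in $\Sigma$, which SVGL permits) and then choose the transitive \emph{point} $y$ afterward as any point of $\bigcap_k (f_{t_1\cdots t_{N_k}})^{-1}(\overline{W_{k+1}})$, a nested intersection of nonempty compacta. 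That sidesteps the difficulty entirely and is the version I would write up; the ``$S\neq\Sigma\Rightarrow\Sigma\setminus S$ dense'' half is then routine by comparison.
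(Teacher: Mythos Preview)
Your first half has a genuine gap: the compacta $(f_{t_1\cdots t_{N_k}})^{-1}(\overline{W_{k+1}})$ are each nonempty by surjectivity, but they are \emph{not} nested --- the basic open sets $W_m$ carry no inclusion relations, and although $t_1\cdots t_{N_k}$ is a prefix of $t_1\cdots t_{N_{k+1}}$, nothing forces $(f_{t_1\cdots t_{N_{k+1}}})^{-1}(\overline{W_{k+2}}) \subseteq (f_{t_1\cdots t_{N_k}})^{-1}(\overline{W_{k+1}})$ --- so the intersection may be empty and you have produced no transitive point. The paper sidesteps this entirely by a much simpler device: rather than interleaving many words $w_m$, it attaches the \emph{whole} tail $\sigma$ to an arbitrary prefix $u$. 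For each $n$, SVGL supplies $w_n$ with $|w_n|\le M$ and $u\,w_n\,\sigma_1\cdots\sigma_n\in\mathcal L(\Sigma)$; since only finitely many words of length $\le M$ exist, pigeonhole gives a single $w$ working along a subsequence, hence $\sigma':=u w\sigma\in\Sigma$ by closedness of $\Sigma$. Surjectivity is then used exactly once, to pick $x'\in f_{uw}^{-1}(x)$, and $\mathcal O^+_{\sigma'}(x')\supseteq\mathcal O^+_\sigma(x)$ is already dense --- no nested-compacta bookkeeping at all. (A smaller slip: writing $a=\sigma_1\cdots\sigma_n$ makes $a$ a prefix of the given $\sigma$, whereas you need $a\in\mathcal L(\Sigma)$ arbitrary for the cylinders $[a]$ to form a base.)

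The same pigeonhole step is missing from your second half. SVGL only yields, for each finite $j$, a gap $g_j$ with $a\,g_j\,\rho_1\cdots\rho_j\in\mathcal L(\Sigma)$; your claim that ``admissibility of the tail is inherited from $\rho\in\Sigma$'' would let you fix $g=g_1$ once and for all, but that inference is valid for SFTs, not for a general subshift --- knowing $a g\rho_1\in\mathcal L(\Sigma)$ and $\rho_1\rho_2\cdots\in\Sigma$ does not force $a g\rho_1\rho_2\in\mathcal L(\Sigma)$. Once you insert the pigeonhole argument to obtain a single $g$ with $ag\rho\in\Sigma$, the rest of your reasoning for $\Sigma\setminus S$ lines up with the paper's ``similar proof''.
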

\begin{proof}
	We prove the first part; the other part has similar proof.
	
	Choose any $\sigma=\sigma_1\sigma_2\cdots\in\Sigma$ such that $\overline{\mathcal{O}_\sigma^+(x)}=X$. Let $[u]$ be a cylinder in $\Sigma$ and use the SVGL property of $\Sigma$ to pick $w_n\in\mathcal{L}(\Sigma)$ such that $uw_n\sigma_1\sigma_2\cdots\sigma_n\in\mathcal{L}(\Sigma)$ with $|w_n|\leq M$ where $M$ is provided by the definition of SVGL.
	Since $\{w_n\in\mathcal{L}(\Sigma):\; |w_n|\leq M\}$ is finite, 
	there is a $w$ and an infinite  subsequence $n_i$ such that for all $i$, $w_{n_i}=w$. Let  $\sigma'=uw\sigma_1\sigma_2\cdots$ be the unique point in $\cap_{i\in\N}[uw\sigma_1\sigma_2\cdots\sigma_{n_i}]$ and observe that for $x'\in  f^{-1}_{uw}(x)$, $\overline{\mathcal{O}_{\sigma'}^+(x')}=X$. This implies $\sigma'\in [u]\cap S$ and since $[u]$ was arbitrary, we are done.
\end{proof}

\begin{rem}
	Assume the hypothesis of Proposition \ref{prop SVGL} and
	let for some $\sigma\in\Sigma$, $\omega_\sigma(x)$ be the \emph{$\omega$ limit set of $x$ along $\sigma$}, that is the limit set of $\mathcal{O}^+_\sigma(x)=\{f_{\sigma_1\sigma_2\cdots\sigma_n}(x):\;n\in\N\}$. The proof of Proposition \ref{prop SVGL} shows that
	$$\{\sigma'\in\Sigma:\;\exists\,x'\in X\text{ s.t. }\omega_{\sigma'}(x')=\omega_\sigma(x)\}$$ is dense in $\Sigma$.
\end{rem}

Now we show that the SVGL property is a necessity in the hypothesis of Proposition \ref{prop SVGL}.
\begin{example}\label{ex nonSVGL}
	Let $X$, $ w_0=010 $, $ w_1=101 $, $f_0$ and $f_1$ be as in Example \ref{Ex in sequence}  and  set $u_0=000$. 
	Let $\sigma^0=\sigma_{1}^0\sigma_{2}^0\cdots \sigma_{\ell}^0\cdots=w_0w_1\cdots$ be defined as in \eqref{eq sigma0} and let $\mathfrak{I}=(X,\,\{f_0,\,f_1\},\, \Sigma)$ where $\Sigma$ is generated by 
	\[
	\mathcal{W}=\{u_0w_0^\ell\sigma_{1}^0\sigma_{2}^0\cdots \sigma_{\ell}^0w_0^\ell u_0:\; \ell\in\N\setminus\}.
	\] 
	See a presentation for $\Sigma$ in Figure \ref{fig coversyn}.
	\begin{figure}
		\begin{center}
			
			\tikzset{every picture/.style={line width=0.45pt}} 
			
			
			\begin{tikzpicture}[x=0.65pt,y=0.40pt,yscale=-1,xscale=1]
				
				\draw    (312.02,410.88) -- (250.47,363.54) ;
				\draw [shift={(248.0,359.59)}, rotate = 394] [color={rgb, 255:red, 0; green, 0; blue, 0 }  ][line width=0.75]    (10.93,-3.29) .. controls (6.95,-1.4) and (3.31,-0.3) .. (0,0) .. controls (3.31,0.3) and (6.95,1.4) .. (10.93,3.29)   ;
				\draw [shift={(313.0,410.5)}, rotate = 217.57] [color={rgb, 255:red, 0; green, 0; blue, 0 }  ][line width=0.75]        ;
				\draw    (381.96,364.66) -- (315.53,411.17) ;
				\draw [shift={(313.0,415.5)}, rotate = 327] [color={rgb, 255:red, 0; green, 0; blue, 0 }  ][line width=0.75]    (10.93,-3.29) .. controls (6.95,-1.4) and (3.31,-0.3) .. (0,0) .. controls (3.31,0.3) and (6.95,1.4) .. (10.93,3.29)   ;
				\draw [shift={(383.89,363.32)}, rotate = 145.01] [color={rgb, 255:red, 0; green, 0; blue, 0 }  ][line width=0.75]         ;
				\draw    (248.89,359.97) -- (248.89,299.32) ;
				\draw [shift={(248.89,297.32)}, rotate = 450] [color={rgb, 255:red, 0; green, 0; blue, 0 }  ][line width=0.75]    (10.93,-3.29) .. controls (6.95,-1.4) and (3.31,-0.3) .. (0,0) .. controls (3.31,0.3) and (6.95,1.4) .. (10.93,3.29)   ;
				\draw [shift={(248.89,362.32)}, rotate = 270] [color={rgb, 255:red, 0; green, 0; blue, 0 }  ][line width=0.75]        ;
				\draw    (383.89,361.32) -- (383.89,298.32) ;
				\draw [shift={(383.89,363.32)}, rotate = 270] [color={rgb, 255:red, 0; green, 0; blue, 0 }  ][line width=0.75]    (10.93,-3.29) .. controls (6.95,-1.4) and (3.31,-0.3) .. (0,0) .. controls (3.31,0.3) and (6.95,1.4) .. (10.93,3.29)   ;
				\draw    (251.24,297.33) -- (381.89,298.3) ;
				\draw [shift={(383.89,298.32)}, rotate = 180.42] [color={rgb, 255:red, 0; green, 0; blue, 0 }  ][line width=0.75]    (10.93,-3.29) .. controls (6.95,-1.4) and (3.31,-0.3) .. (0,0) .. controls (3.31,0.3) and (6.95,1.4) .. (10.93,3.29)   ;
				\draw [shift={(248.89,297.32)}, rotate = 0.42] [color={rgb, 255:red, 0; green, 0; blue, 0 }  ][line width=0.75]      ;
				\draw    (383.89,296.32) -- (383.89,235.67) ;
				\draw [shift={(383.89,233.32)}, rotate = 270] [color={rgb, 255:red, 0; green, 0; blue, 0 }  ][line width=0.75]         ;
				\draw [shift={(383.89,298.32)}, rotate = 270] [color={rgb, 255:red, 0; green, 0; blue, 0 }  ][line width=0.75]    (10.93,-3.29) .. controls (6.95,-1.4) and (3.31,-0.3) .. (0,0) .. controls (3.31,0.3) and (6.95,1.4) .. (10.93,3.29)   ;
				\draw    (248.89,294.97) -- (248.89,234.32) ;
				\draw [shift={(248.89,232.32)}, rotate = 450] [color={rgb, 255:red, 0; green, 0; blue, 0 }  ][line width=0.75]    (10.93,-3.29) .. controls (6.95,-1.4) and (3.31,-0.3) .. (0,0) .. controls (3.31,0.3) and (6.95,1.4) .. (10.93,3.29)   ;
				\draw [shift={(248.89,297.32)}, rotate = 270] [color={rgb, 255:red, 0; green, 0; blue, 0 }  ][line width=0.75]      ;
				\draw    (251.24,232.35) -- (312.89,233.29) ;
				\draw [shift={(314.89,233.32)}, rotate = 180.87] [color={rgb, 255:red, 0; green, 0; blue, 0 }  ][line width=0.75]    (10.93,-3.29) .. controls (6.95,-1.4) and (3.31,-0.3) .. (0,0) .. controls (3.31,0.3) and (6.95,1.4) .. (10.93,3.29)   ;
				\draw [shift={(248.89,232.32)}, rotate = 0.87] [color={rgb, 255:red, 0; green, 0; blue, 0 }  ][line width=0.75]        ;
				\draw    (317.24,233.35) -- (378.89,234.29) ;
				\draw [shift={(380.89,234.32)}, rotate = 180.87] [color={rgb, 255:red, 0; green, 0; blue, 0 }  ][line width=0.75]    (10.93,-3.29) .. controls (6.95,-1.4) and (3.31,-0.3) .. (0,0) .. controls (3.31,0.3) and (6.95,1.4) .. (10.93,3.29)   ;
				\draw [shift={(314.89,233.32)}, rotate = 0.87] [color={rgb, 255:red, 0; green, 0; blue, 0 }  ][line width=0.75]       ;
				\draw    (248.89,229.97) -- (248.89,182.32) ;
				\draw [shift={(248.89,180.32)}, rotate = 450] [color={rgb, 255:red, 0; green, 0; blue, 0 }  ][line width=0.75]    (10.93,-3.29) .. controls (6.95,-1.4) and (3.31,-0.3) .. (0,0) .. controls (3.31,0.3) and (6.95,1.4) .. (10.93,3.29)   ;
				\draw [shift={(248.89,232.32)}, rotate = 270] [color={rgb, 255:red, 0; green, 0; blue, 0 }  ][line width=0.75]      ;
				\draw    (383.89,231.32) -- (383.89,183.67) ;
				\draw [shift={(383.89,181.32)}, rotate = 270] [color={rgb, 255:red, 0; green, 0; blue, 0 }  ][line width=0.75]       ;
				\draw [shift={(383.89,233.32)}, rotate = 270] [color={rgb, 255:red, 0; green, 0; blue, 0 }  ][line width=0.75]    (10.93,-3.29) .. controls (6.95,-1.4) and (3.31,-0.3) .. (0,0) .. controls (3.31,0.3) and (6.95,1.4) .. (10.93,3.29)   ;
				\draw    (249.89,136.97) -- (249.89,89.32) ;
				\draw [shift={(249.89,87.32)}, rotate = 450] [color={rgb, 255:red, 0; green, 0; blue, 0 }  ][line width=0.75]    (10.93,-3.29) .. controls (6.95,-1.4) and (3.31,-0.3) .. (0,0) .. controls (3.31,0.3) and (6.95,1.4) .. (10.93,3.29)   ;
				\draw [shift={(249.89,139.32)}, rotate = 270] [color={rgb, 255:red, 0; green, 0; blue, 0 }  ][line width=0.75]      ;
				\draw    (382.89,138.32) -- (382.89,90.67) ;
				\draw [shift={(382.89,88.32)}, rotate = 270] [color={rgb, 255:red, 0; green, 0; blue, 0 }  ][line width=0.75]     ;
				\draw [shift={(382.89,140.32)}, rotate = 270] [color={rgb, 255:red, 0; green, 0; blue, 0 }  ][line width=0.75]    (10.93,-3.29) .. controls (6.95,-1.4) and (3.31,-0.3) .. (0,0) .. controls (3.31,0.3) and (6.95,1.4) .. (10.93,3.29)   ;
				\draw    (249.89,84.97) -- (249.89,51.32) ;
				\draw [shift={(249.89,49.32)}, rotate = 450] [color={rgb, 255:red, 0; green, 0; blue, 0 }  ][line width=0.75]    (10.93,-3.29) .. controls (6.95,-1.4) and (3.31,-0.3) .. (0,0) .. controls (3.31,0.3) and (6.95,1.4) .. (10.93,3.29)   ;
				\draw [shift={(249.89,87.32)}, rotate = 270] [color={rgb, 255:red, 0; green, 0; blue, 0 }  ][line width=0.75]      ;
				\draw    (382.89,86.32) -- (382.89,53.67) ;
				\draw [shift={(382.89,51.32)}, rotate = 270] [color={rgb, 255:red, 0; green, 0; blue, 0 }  ][line width=0.75]      ;
				\draw [shift={(382.89,88.32)}, rotate = 270] [color={rgb, 255:red, 0; green, 0; blue, 0 }  ][line width=0.75]    (10.93,-3.29) .. controls (6.95,-1.4) and (3.31,-0.3) .. (0,0) .. controls (3.31,0.3) and (6.95,1.4) .. (10.93,3.29)   ;
				\draw    (252.24,87.32) -- (295.89,87.32) ;
				\draw [shift={(297.89,87.32)}, rotate = 180] [color={rgb, 255:red, 0; green, 0; blue, 0 }  ][line width=0.75]    (10.93,-3.29) .. controls (6.95,-1.4) and (3.31,-0.3) .. (0,0) .. controls (3.31,0.3) and (6.95,1.4) .. (10.93,3.29)   ;
				\draw [shift={(249.89,87.32)}, rotate = 0] [color={rgb, 255:red, 0; green, 0; blue, 0 }  ][line width=0.75]      ;
				\draw    (337.24,88.32) -- (380.89,88.32) ;
				\draw [shift={(382.89,88.32)}, rotate = 180] [color={rgb, 255:red, 0; green, 0; blue, 0 }  ][line width=0.75]    (10.93,-3.29) .. controls (6.95,-1.4) and (3.31,-0.3) .. (0,0) .. controls (3.31,0.3) and (6.95,1.4) .. (10.93,3.29)   ;
				\draw [shift={(334.89,88.32)}, rotate = 0] [color={rgb, 255:red, 0; green, 0; blue, 0 }  ][line width=0.75]     ;
				
				\draw (257,390.4) node [anchor=north west][inner sep=0.75pt]    {$u_{0}$};
				\draw (350.89,391.22) node [anchor=north west][inner sep=0.75pt]    {$u_{0}$};
				\draw (310.89,420.22) node [anchor=north west][inner sep=0.75pt]    {$b$};
				\draw (226,322.4) node [anchor=north west][inner sep=0.75pt]    {$w_{0}$};
				\draw (388,322.4) node [anchor=north west][inner sep=0.75pt]    {$w_{0}$};
				\draw (226,254.4) node [anchor=north west][inner sep=0.75pt]    {$w_{0}$};
				\draw (388,254.4) node [anchor=north west][inner sep=0.75pt]    {$w_{0}$};
				\draw (300,265) node [anchor=north west][inner sep=0.75pt]    {$\sigma^0 _{{1}}$};
				\draw (265,201) node [anchor=north west][inner sep=0.75pt]    {$\sigma^0 _{{1}}$};
				\draw (335,201) node [anchor=north west][inner sep=0.75pt]    {$\sigma^0 _{{2}}$};
				\draw (303.5,82.5) node [anchor=north west][inner sep=0.75pt]    {$\cdots $};
				\draw (226,202.4) node [anchor=north west][inner sep=0.75pt]    {$w_{0}$};
				\draw (388,202.4) node [anchor=north west][inner sep=0.75pt]    {$w_{0}$};
				\draw (226,105.4) node [anchor=north west][inner sep=0.75pt]    {$w_{0}$};
				\draw (388,105.4) node [anchor=north west][inner sep=0.75pt]    {$w_{0}$};
				\draw (226,58.4) node [anchor=north west][inner sep=0.75pt]    {$w_{0}$};
				\draw (388,58.4) node [anchor=north west][inner sep=0.75pt]    {$w_{0}$};
				\draw (246,1.4) node [anchor=north west][inner sep=0.75pt]    {$\vdots $};
				\draw (380,1.4) node [anchor=north west][inner sep=0.75pt]    {$\vdots $};
				\draw (246,131) node [anchor=north west][inner sep=0.75pt]    {$\vdots $};
				\draw (380,131) node [anchor=north west][inner sep=0.75pt]    {$\vdots $};
				\draw (263,57) node [anchor=north west][inner sep=0.75pt]    {$\sigma^0 _{{1}}$};
				\draw (344,57) node [anchor=north west][inner sep=0.75pt]    {$\sigma^0 _{{n}}$};

			\end{tikzpicture}

		\end{center}
		\caption{ Any word in $\mathcal{W}$ starts and terminates at $b$.}
		\label{fig coversyn}
	\end{figure}
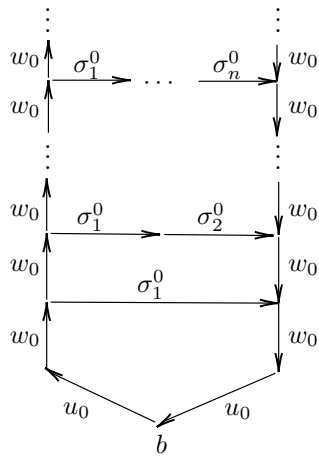

	If $v=v_1\cdots v_{|v|}\in\mathcal{W}$,  then 0 and 1 are fixed by $f_v$ and for any other  $x_i\in X\setminus\{0,\,1\}$, $f_{v_1\cdots v_\ell}(x_i)=x_j$ where $1\leq \ell\leq |v|$ and $j >  i$.
	
	Observe that $\sigma^0\in\Sigma$  does not have $u_0$ as a subword  and also by the same reasoning for $\mathfrak{I}_2$ in Example  \ref{Ex in sequence} , $\overline{{O}^+_{\sigma^0}(x)}=X$ for $x\in X\setminus\{0,\,1\}$.
	On the other hand if $u_0$ appears in a $ \sigma\in\Sigma$ infinitely (resp. finitely) many times, then by our construction where any $u_0$ appears only on the beginning or ending of members of  $\mathcal{W}$, 
	this $\sigma$ must start with a terminal subword of a $w\in \mathcal{W}$, may be empty, and afterwards has some infinite concatenation of the members of $\mathcal{W}$ (resp. eventually will terminate at $w_0^\infty$). 
	This in turn implies that $\overline{{O}^+_{\sigma}(x)}\neq X$ for any $x\in X$. 
	In fact then 0 and 1 are fixed by the orbit along $\sigma$ and any other $x$ marches to 1 along that orbit with some relatively minor fluctuations.
	Hence, if $ \sigma $ is transitive then for any $ x,\; \overline{{O}^+_{\sigma}(x)}\neq X $.
	In particular, 
	if $\sigma\in[u_0]$, then $\sigma\not\in S$ and consequently $S$ is not dense in $\Sigma$. 
\end{example}

Since $u_0u_0$ is a synchronizing word,
the above example is synchronized. For irreducible shifts we have the following implications.
$$\text{ full shift }\Rightarrow\text{ SFT }\Rightarrow\text{ sofic }\Rightarrow\text{ SVGL }\Rightarrow\text{ synchronized }\Rightarrow\text{ coded}.$$
Thus the transitive non-autonomous systems in an IFS whose subshift is synchronized or beyond may be scarce.

Observe that by Proposition \ref{prop tra along sigma}, the conclusion of Proposition \ref{prop SVGL} is immediate when $\Sigma$ is sofic; that is
because the orbit of a transitive $\sigma$, attained by the above proposition,  is again in $S$ and is dense in $\Sigma$. 
However, still we cannot
guarantee that $S$ is residual as the next example shows,  even for a case where $\Sigma$ is a mixing SFT. 

\begin{example}\label{ex not dense}
	Let $X$ and $\mathcal{F}=\{f_0,\, f_1\}$ be as in Example \ref{Ex in sequence} and 
	consider $\mathfrak{I}_\Sigma=(X,\,\mathcal{F},\,\Sigma)$. 
	\begin{enumerate}
		\item
		First let $\Sigma=\Sigma_{|\mathcal{F}|}$ and let  $ w_i $  be a word consisting  of the concatenation of all words of length $ i\in\N $ in $\mathcal{L}(\Sigma_{|\mathcal{F}|})$ and notice that
		$\frac{0_{w_i}}{|w_i|}=\frac{1}{2}$. As a result, if $u={1^{|w_i|}{w_i}}$, then
		$f_u(x)$ moves $x\not\in\{0,\,1\}$ at least $\frac{|w_i|}{2}$ to left. Therefore, for the transitive
		$$ t=1^{|w_1|}w_11^{|w_2|}w_21^{|w_3|}w_3 \cdots\in\Sigma_{|\mathcal{F}|},$$
		$ 1\notin\overline{\mathcal{O}^+_{t}(x)} $ and so $\overline{\mathcal{O}^+_{t}(x)}\neq X$. In particular, this shows that the conclusion of Proposition \ref{prop tra along sigma} is not necessarily valid for all transitive points in an irreducible sofic shift. Clearly $S(\mathfrak{I}_{\Sigma_{|\mathcal{F}|}})$, although dense, it is not closed and hence it is not a subshift.

		\item 
		To complete our collection of the possible various cases of $S(\mathfrak{I})$, we construct an example where $S(\mathfrak{I})$ is a dense uncountable but not residual subset of the subshift. To do this
		let $\Sigma_\mathcal{W}$ be the SFT generated by $\mathcal{W}=\{w_0=100,\,w_1=011,\, w_2=000\}$ and call the associated IFS $\mathfrak{I}_{\mathcal{W}}$. 
		
		We have 	$\frac{0_{w_0}}{|w_0|}=\frac{1_{w_1}}{|w_1|}=\frac{2}{3}$. Hence if $\sigma^0$ is chosen as in \eqref{eq sigma0}, then $\overline{\mathcal{O}^+_{\sigma^0}(x)}=X$ 
		for $x\in X\setminus \{0,\,1\}$. However, $\mathcal{O}^+_{0^\infty}(x)$ is not dense for any $x\in X$ and hence $S(\mathfrak{I}_\mathcal{W})$ is not closed and again not a subshift. Also,  observe that  the subshift $\Sigma_{\mathcal{W}'}$ generated by 	$\mathcal{W}'=\cup_{k\in\N}\{w_0^kw_1^k,\,w_1^kw_0^k\}$	 is a subsystem of $\Sigma_\mathcal{W}$ and any transitive point of that lies in $S(\mathfrak{I}_\mathcal{W})$. The latter follows from the fact that $f_{u}(x)=x$ for $u=w_0^kw_1^k$ or $u=w_1^kw_0^k$
		and the fact that $w_0^kw_1^kw_1^kw_0^k$ is a subword for a transitive point in $\Sigma_{\mathcal{W}'}$ for any $k\in\N$.
		Thus any $x\in X\setminus\{0,\,1\}$ moves left and right as far as possible.
		This implies that $S(\mathfrak{I}_{\mathcal{W}'})\subset S(\mathfrak{I}_\mathcal{W})$ has uncountable points.

		Now we show that in this example $S(\mathfrak{I}_\mathcal{W})$ is  not a residual subset of $\Sigma_\mathcal{W}$. 
		It is an easy consequence of the Birkhoff's ergodic theorem that the frequency of $w_i\in\mathcal{W}$ is $\frac{1}{3}$ for almost all $\sigma\in\Sigma_\mathcal{W}$ 
		(we are considering the Markov measure on $\Sigma_\mathcal{W}$: A unique ergodic Borel measure $\mu$ which is positive on open sets and has the maximum metric entropy among all other measures). 
		This means that occurrence of $0$ is as twice as that of $1$ for almost all $\sigma$.
		Thus for   $x\in X$ and almost all $\sigma$, $\overline{{O}^+_{\sigma}(x)}\neq X$. This in turn implies that $\mu(S(\mathfrak{I}_\mathcal{W}))=0$. Now if $S(\mathfrak{I}_\mathcal{W})$  was residual in $\Sigma_\mathcal{W}$, then $S(\mathfrak{I}_\mathcal{W})$ would be measurable and since it is shift invariant it must have full measure which is impossible for this example.
		
		If one chooses $w_2$ in $\mathcal{W}$
		to be $0000$, then gcd$\{|w_i|\,|\; w_i\in\mathcal{W},\, 0\leq i\leq 2\}=1$ which implies that $\Sigma_\mathcal{W}$ is a mixing SFT (\cite{dastjerdi2019mixing, epperlein2019mixing}). So either mixing or non-mixing, there are examples that $S$, in spite of being invariant and having a  transitive point under the shift map, is not residual.
	\end{enumerate}
\end{example}  
\section{Mixing and exactness in an IFS} \label{sec Mixing in IFS }
Clearly mixing along an orbit given in Definition \ref{defn along an orbit} implies mixing defined in Definition \ref{def intro} and the converse is not true as the next example shows.
This example also shows that 
if the IFS is mixing, unlike Proposition \ref{prop tra along sigma}, 
one cannot have mixing along an orbit.

\begin{example}\label{Ex 0x1x}
	Let $ \mathfrak{I}=\left(X=\{0,\,1\}^\mathbb{N},\,\mathcal{F}=\{f_0,\,f_1\},\,\Sigma_{|\mathcal{F}|}\right) $
	and for $\xi=\xi_1\xi_2\cdots\in X$ define 
	\begin{align*}
		f_0(\xi)&=0\xi=0\xi_1\xi_2\cdots,\\
		f_1(\xi)&=1\xi=1\xi_1\xi_2\cdots.
	\end{align*}
	For $w=w_1\cdots w_{n-1}w_{n}$, set $w^{-1}:=w_nw_{n-1}\cdots w_1$ and observe that
	$f_w(\xi)=w^{-1}\xi$.
	Now let $[u]$ and $[v]$ be any cylinder and set $M:=|v|$. Then for $m\geq M$ and $ w\in \mathcal{L}_{m}(\Sigma) $, $f_w([u])\cap [v]\neq\emptyset$ iff
	$w$ is a word terminating at $v^{-1}$  and hence $ \mathfrak{I} $  is mixing. 
	On the other hand assume $ \sigma\in \Sigma ,\, v=100 $ and $ u $ any word.
	Now for $m\geq 2$, if $ f_{\sigma_1\ldots \sigma_m}([u])\cap [v] \neq \emptyset $, then $ w=\sigma_1\cdots \sigma_m $ terminates at $ v^{-1} $ but neither $ w0 $ nor $ w1 $ terminates at $ v^{-1} $. This  implies that both $ f_{w0}([u])\cap [v]$ and $f_{w1}([u])\cap [v] $ are empty sets. Thus $ \mathfrak{I} $ is not  mixing along any orbit $ \sigma $. 
\end{example}
Next example shows that simple dynamics in the individual maps in an IFS may raise to a rich dynamics in the IFS.
Intuitively, if we have two maps in an IFS that one flows all the point in a definite direction and the other on the opposite direction, then the arbitrary combination of these maps can create a complicate dynamics. Example \ref{Ex 0x1x} had this property but the IFS was not as rich as the following.
\begin{example}\label{ex 2 non exact}
	Here we give an example such that none of the maps of the IFS, considering as a conventional dynamical system, is transitive but the IFS itself is exact and thus mixing and {topological transitive.}
	
	Let $ \mathfrak{I}=(X=\{0,\,1\}^{\N},\,\{f_0,\,f_1\},\, \Sigma_{|\mathcal{F}|}=\{0,\,1\}^{\N}) $ be an IFS where for $ \zeta=\zeta_1\zeta_2\cdots \in \{0,\,1\}^{\N} $,
	\begin{equation}\label{eq f_i}
		f_i(\zeta)=
		\begin{cases}
			i\zeta_1\zeta_2\cdots & \text{if}\quad  \zeta_1=i,\\
			\zeta_2\zeta_3\cdots & \text{if}\quad  \zeta_1\neq i .
		\end{cases}
	\end{equation}
	we have the following observations
	\begin{enumerate}
		\item 
		$f_i$ is a finite to 1 surjective open map 
		with 
		$0^\infty$ and $1^\infty$ its only fixed points.
		\item
		$f_0$ (resp. $f_1$) attracts all points in $X\setminus\{1^\infty\}$ (resp. $X\setminus\{0^\infty\}$) and leaves the point $1^\infty$ (resp. $0^\infty$) fixed. Thus $f_i$ is not transitive and has a very simple dynamics.
		\item
		Any $\zeta=\zeta_1\zeta_2\cdots\in X$ is periodic of any given even period  $p=2q\in\N$ along a $\sigma\in\Sigma$. 
		To see this  set  
		$$ \sigma=(\zeta_1^q{\zeta_1^*}^q)^\infty=\left(\overbrace{\zeta_1\zeta_1\cdots \zeta_{1}}^{q \text{ times}}\overbrace{\zeta_1^*\zeta_1^*\cdots \zeta_{1}^*}^{q\text{ times}}\right)^\infty $$ 
		where for $a\in\mathcal{A}=\{0,\,1\}$,
		\begin{equation}\label{eq a*}
			a^*=
			\begin{cases}
				1,\quad a=0\\
				0,\quad a=1.
			\end{cases}
		\end{equation}
		
		Also, any transitive $\zeta=\zeta_1\zeta_2\cdots\in X$  is transitive along the transitive point $\zeta^*=\zeta_1^*\zeta_2^*\cdots\in\Sigma$. A point such as $\zeta=(\zeta_1\zeta_2\cdots\zeta_{p})^\infty\in X$ is periodic of period $p$ along the periodic point $\zeta^*=(\zeta_1^*\zeta_1^*\cdots\zeta_{p}^*)^\infty\in \Sigma$.  
		\item
		$\mathfrak{I}$ is exact along a transitive point.
		\begin{proof}
			Fix an  open set $ U\subseteq X $ and pick $w\in\mathcal{L}_{k}(\Sigma_{|\mathcal{F}|})$ such that ${[w]}\subseteq U$.  Set $ w^*:=w_0^*\cdots w_{k}^* $, $w^*_i$ defined as in \eqref{eq a*}, and note that  $X= f_{w^*v}([w])$ where $w^*v$ is any word whose initial segment is $w^*$.
			
			The set $\mathcal{L}_m(\Sigma_{|\mathcal{F}|})$ has $2^m$ words.
			Set $ \mathcal{P}_m(\mathcal{L}_m(\Sigma_{|\mathcal{F}|}))=\{v_1^m,\,\ldots,\,v_{2^m!}^m\}\subseteq \mathcal{L}_{m2^m}(\Sigma_{|\mathcal{F}|}) $ to be the set of $2^m!$ words constructed from the permutation of  words in $\mathcal{L}_m(\Sigma_{|\mathcal{F}|})$ and for $n>m$, let
			$$t=v_1^1v_2^1\cdots  v_1^m\cdots v_{2^m!}^m\cdots  v_1^n\cdots v_{2^n!}^n\cdots=u_1u_2\cdots\in \Sigma_{|\mathcal{F}|},$$
			be the transitive point where $u_1=v_1^1$, $u_2=v_2^1$ and so on. So each $u_i$ is one of the $v_j^m$'s coming after each other in the obvious order.
			Observe that $u_i$ has the same number of 0's and 1's and any word $v\in\mathcal{L}(\Sigma_{|\mathcal{F}|})$ appears as the initial segment of infinitely many $u_i$'s. 	We will show that $\mathfrak{I}$ is exact along $t$.

			Another observation is that  for any word $b$ such as $u_i$ whose 0's and 1's are equal, and any cylinder $[a]$, $|f_b([a])|\leq |[a]|$.
			
			Set $$[a_i]:=f_{u_1\cdots u_i}([w])$$
			and note that $\{|a_i|\}_{i\in\N}$ is a non-increasing sequence.
			Moreover, if $|a_{i+1}|<|a_i|$ for some $|w|$ instances of $i$'s along $t$, then call the last instance $\ell$ and notice that then $f_{u_1\cdots u_\ell}([w])=X$ and so in this case this IFS is exact along $t$. 
			Otherwise, without loss of generality assume that for all $i\in\N$, $|a_i|=|w|$.
			We will show that this latter case does not happen and so we are done.
			
			First let $b=b_1\cdots b_{|b|}$ be any word  and let $|f_b([a])|= |[a]|$ where $a=a_1\cdots a_{|a|}$.
			Let $m(a,\,b)=\min\{|f_{b_1\cdots b_i}([a])|:\; 1\leq i\leq |b|\}$ and 
			set
			$$\alpha=\alpha(a,\,b):=\max\{
			i:\; |f_{b_1\cdots b_i}([a])|=m(a,\,b),\, 1\leq i\leq |b|\}.$$
			In other words, $\alpha(a,\,b)$ is the last instance where $f_{b_1\cdots b_i}([a])$ has the shortest length.
			Let  
			$f_{b_1\cdots b_\alpha}([a])=[a']=[a'_1\cdots a'_{|a'|}]$ for some $a'$, $|a'|<|a|$. In fact $a'$ is the terminal segment of $a$.
			Since $|f_{b_1\cdots b_{\alpha+i}}([a])|>|a'|$ for $1\leq i\leq |b|-\alpha$,  by the definition of $f_j$'s, $b_{\alpha+1}=a'_1$ and in particular
			$f_b([a])=[a'^{\beta(a,\,b)}_1a']$ where $$\beta(a,\,b)=|a|-|a'|.$$ 
			
			Now assume $|a_i|=|w|$ and set 
			$\alpha_{i}=\alpha(a_i,\,u_{i+1})$ and 
			$\beta_i=\beta(a_i,\,u_{i+1})$.
			If $\beta_{i+1}\leq \beta_{i}$, then 
			$[a_i]=[a_{i+1}]$. 
			So if there is $M\in\N$ such that for $i\geq M$, $\beta_{i+1}\leq \beta_{i}$; or equivalently, for $i\geq M$, $[a_i]=[a_{M}]$, 
			then along $t$ we arrive at a $u_{\ell}$ whose initial segment is $a_M^*$ and then $f_{u_1\cdots u_\ell}([w])=X$. This violates our assumption that $|a_i|=|w|$.

			So the only other possibility is that $|a_i|=|w|$ and
			for any $M\in\N$, there is an $i>M$ where $0\leq \beta_{i}<\beta_{i+1}\leq |a_i|$ which is clearly not possible.	
		\end{proof}
	\end{enumerate}
\end{example}

Next we give an example whose any map in the IFS is  exact as a conventional dynamical system, though the IFS itself is not exact; somehow presenting opposite properties comparing to the previous example.
\begin{example}\label{ex maps exact} 
	Let $\mathfrak{I}=(X=[0,\,1],\,\{f_0,\,f_1\},\,\Sigma=\{(01)^\infty,\,(10)^\infty\})$. To define $f_i$,  choose two different points $x_0,\,x_1\in (0,\,1)$ and small open interval $I_i$ around  $x_i$ such that $x_j\not\in I_i$ if $j\neq i$.  We aim to have
	\[
	I_0\xrightarrow{f_0}I_1\xrightarrow{f_1} I_0 
	\]
	and $f_0$ (resp. $f_1$) being contracting on  points of $I_0$ (resp. $I_1$) with an infimum rate  $c_0\in (\frac{1}{2},\,1)$ and elsewhere expansive with infimum rate $e_0>2$.
	An example of $ f_0 $ and $ f_1 $ can be those presented in Figure \ref{figure 2exacts}.
	
	\tikzset{every picture/.style={line width=0.75pt}} 
	\begin{figure}[ht]
		\begin{tikzpicture}[x=0.87pt,y=0.87pt,yscale=-1,xscale=1]
			
			\draw  (30.73,320) -- (210,320)(50,176.98) -- (50,339.11) (203.19,314.89) -- (210.19,320) -- (203.19,324.89) (44.73,183.98) -- (50,176.98) -- (54.73,183.98);
			\draw  [dash pattern={on 0.84pt off 2.51pt}]  (50,210) -- (162.57,210) ;
			\draw  [dash pattern={on 0.84pt off 2.51pt}]  (162,210) -- (162,320) ;
			\draw  [dash pattern={on 0.84pt off 2.51pt}]  (105,210) -- (105,320) ;
			\draw  [dash pattern={on 0.84pt off 2.51pt}]  (134,210) -- (134,320) ;
			\draw  [dash pattern={on 0.84pt off 2.51pt}]  (78,210) -- (78,320) ;
			\draw    (50,210) -- (60,266) ;
			\draw    (72,273) -- (60,266) ;
			\draw    (72,273) -- (78,320) ;
			\draw   (78,320) -- (105,210) ;
			\draw [line width=0.75]   (105,210) -- (134,320) ;
			\draw   (134,320) -- (162,210) ;
			\draw [line width=0.75]    (95.5,316.5) -- (95.5,322.5) ;
			\draw [line width=0.75]    (70.5,316.5) -- (70.5,322.5) ;
			\draw [line width=0.75]    (52.5,273) -- (47.5,273) ;
			\draw [line width=0.75]    (52.5,300) -- (47.5,300) ;

			\draw  (221.41,320) -- (400.87,320)(240,176.98) -- (240,339.11) (393.87,314.89) -- (400.87,320) -- (393.87,324.89) (235.41,183.98) -- (240,176.98) -- (245.41,183.98)  ;
			\draw  [dash pattern={on 0.84pt off 2.51pt}]  (240,210) -- (353,210) ;
			\draw  [dash pattern={on 0.84pt off 2.51pt}]  (353,210) -- (353,320) ;
			\draw  [dash pattern={on 0.84pt off 2.51pt}]  (296,210) -- (296,320) ;
			\draw  [dash pattern={on 0.84pt off 2.51pt}]  (324,210) -- (324,320) ;
			\draw  [dash pattern={on 0.84pt off 2.51pt}]  (268,210) -- (268,320) ;
			\draw [line width=0.75]  (240,320) -- (268,210) ;
			\draw    (268,210) -- (277,297) ;
			\draw    (296,320) -- (289,302) ;
			\draw   (277,297) -- (289,302) ;
			\draw   (324,210) -- (296,320) ;
			\draw   (353,320) -- (324,210) ;
			\draw [line width=0.75]    (284.5,317.5) -- (284.5,323.5) ;
			\draw [line width=0.75]    (260,317.5) -- (260,322.5) ;
			\draw [line width=0.75]    (242.5,273) -- (237.5,273) ;
			\draw [line width=0.75]    (242.5,300) -- (236.5,300) ;
			
			\draw (30,205) node [anchor=north west][inner sep=0.75pt]    {$1$};
			\draw (74,321) node [anchor=north west][inner sep=0.75pt]    {\tiny $\dfrac{1}{4}$};
			\draw (129,321) node [anchor=north west][inner sep=0.75pt]    {\tiny $\dfrac{3}{4}$};
			\draw (158,325) node [anchor=north west][inner sep=0.75pt]    {$1$};
			\draw (28,297) node [anchor=north west][inner sep=0.75pt]    {$x_{0}$};
			\draw (28,268) node [anchor=north west][inner sep=0.75pt]    {$x_{1}$};
			\draw (60,325) node [anchor=north west][inner sep=0.75pt]    {$x_{0}$};
			\draw (90,325) node [anchor=north west][inner sep=0.75pt]    {$x_{1}$};
			\draw (220,205) node [anchor=north west][inner sep=0.75pt]    {$1$};
			\draw (264,321) node [anchor=north west][inner sep=0.75pt]    {\tiny $\dfrac{1}{4}$};
			\draw (319,321) node [anchor=north west][inner sep=0.75pt]    {\tiny $\dfrac{3}{4}$};
			\draw (350,325) node [anchor=north west][inner sep=0.75pt]    {$1$};
			\draw (217,297) node [anchor=north west][inner sep=0.75pt]    {$x_{0}$};
			\draw (217,268) node [anchor=north west][inner sep=0.75pt]    {$x_{1}$};
			\draw (251,325) node [anchor=north west][inner sep=0.75pt]    {$x_{0}$};
			\draw (281,325) node [anchor=north west][inner sep=0.75pt]    {$x_{1}$};
			\draw (92,345) node [anchor=north west][inner sep=0.75pt]    {$f_{0}$};
			\draw (283,345) node [anchor=north west][inner sep=0.75pt]    {$f_{1}$};
			\draw (90,195) node [anchor=north west][inner sep=0.75pt]    {\tiny $( \frac{1}{2},1)$};
			\draw (280,195) node [anchor=north west][inner sep=0.75pt]    {\tiny $( \frac{1}{2},1)$};
			
		\end{tikzpicture}
		\caption{$([0,\,1],\,f_i)$ is exact, but
			$\mathfrak{I}=([0,\,1],\,\{f_0,\,f_1\},\,\{(01)^\infty,\,(10)^\infty\})$ is not even {point transitive.}}\label{figure 2exacts}
	\end{figure}
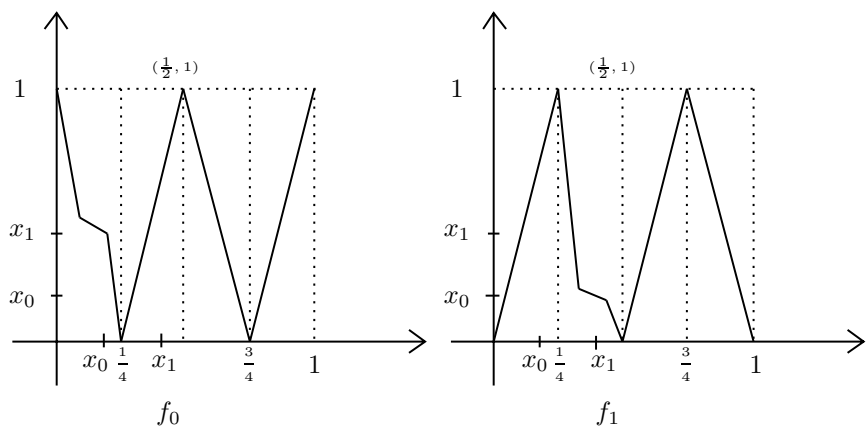
	
	This construction guarantees that $f_i$ being exact; however, a sufficiently small neighborhood around $x_0$ shrinks to a point along  $\sigma=(01)^\infty$. Thus $\mathfrak{I}$ cannot be  exact.
\end{example}



\end{document}